\documentclass[11pt, a4paper]{article}
\usepackage[a4paper,margin=2cm]{geometry}

\usepackage[english]{babel}
\usepackage[T1]{fontenc}
\usepackage{amsmath, amssymb,amsthm}
\usepackage{algorithm}
\usepackage{algpseudocode}
\usepackage{tikz}
\usepackage{hyperref}
\usepackage{dirtytalk}


\usepackage[font=small,width=.85\textwidth]{caption}
\usepackage{subcaption}

\usepackage{setspace} 
\usepackage{etoolbox} 

\setstretch{1.5} 


\newtheorem{theorem}{Theorem}
\newtheorem{definition}{Definition}
\newtheorem{example}{Example}
\newtheorem{lemma}{Lemma}

\usepackage{xspace}
\newcommand{\fspeo}{FSP-EO\xspace}

\usepackage{todonotes}

\usepackage{natbib}
 \bibpunct[, ]{(}{)}{,}{a}{}{,}%

\newcommand{\I}{{\mathcal{I}}} 
\newcommand{\X}{{\mathcal{X}}} 
\newcommand{\F}{{\mathcal{F}}} 
\newcommand{\instfeaturespace}{\F_\I} 
\newcommand{\solfeaturespace}{\F_\X} 
\newcommand{\instance}{I}
\newcommand{\solution}{x}

\newcommand{\instfeaturemap}[1][(\instance^\instanceindex)]{\phi_\I#1} 
\newcommand{\solfeaturemap}[1][(\instance^\instanceindex,\solution^\instanceindex)]{\phi_\X#1} 
\newcommand{\instfeatures}{[p]} 

\newcommand{\fsel}{S} 
\newcommand{\selectionset}{\fsel}

\newcommand{\x}{{x}}

\newcommand{\soldist}[1][(\instanceindex,\instanceindextwo)]{d_\X#1} 
\newcommand{\neighbour}{\ensuremath{\mathcal{N}}}
\newcommand{\sEpsStrict}[1][(\instanceindex,\selectionset)]{\ensuremath{\neighbour^<_\epsilon#1}}
\newcommand{\sEpsBorder}[1][(\instanceindex,\selectionset)]{\ensuremath{\neighbour^=_\epsilon#1}}
\newcommand{\sEpsNeighbour}[1][(\instanceindex,\selectionset)]{\ensuremath{\neighbour^\le_\epsilon#1}}
\newcommand{\sEps}[1][(\instanceindex,\selectionset)]{\ensuremath{\neighbour_\epsilon#1}}
\newcommand{\skOpt}[1][(\instanceindex,\selectionset)]{\neighbour_k^\textnormal{opt}#1}
\newcommand{\skPess}[1][(\instanceindex,\selectionset)]{\neighbour_k^\textnormal{pess}#1}
\newcommand{\skGen}[1][(\instanceindex,\selectionset)]{\neighbour_k#1}
\newcommand{\Inew}{I^\text{new}}

\newcommand{\bc}[2]{\ensuremath{\begin{matrix}#1\\#2\end{matrix}}}
\newcommand{\sel}[1]{\begingroup\setlength{\fboxsep}{1pt}\colorbox{lightgray}{$#1$}\endgroup}
\newcommand{\selbc}[2]{\sel{\bc{#1}{#2}}}

\usepackage{authblk}

\DeclareMathOperator*{\argmax}{arg\,max}
\DeclareMathOperator*{\argmin}{arg\,min}

\title{Feature Selection for Data-Driven Explainable Optimization}
\begin{document}

\author[1]{Kevin-Martin Aigner}
\author[2]{Marc Goerigk}
\author[2]{Michael Hartisch}
\author[1]{Frauke Liers}
\author[1]{Arthur Miehlich\thanks{Corresponding author. Email: arthur.miehlich@fau.de}}
\author[1]{Florian R{\"o}sel}

\affil[1]{Department of Data Science and Department of Mathematics, Friedrich-Alexander-Universität Erlangen-Nürnberg, Germany}
\affil[2]{Business Decisions and Data Science, University of Passau, Germany}

\date{}

\maketitle
\abstract{%
Mathematical optimization, although often leading to NP-hard models,
is now capable of solving even large-scale instances 
within reasonable time. However, the primary focus is often placed
solely on optimality. This implies that while obtained solutions are
globally optimal, they are frequently not comprehensible to humans, in
particular when obtained by black-box routines. In contrast,
explainability is a standard requirement for results in
Artificial Intelligence, but it is rarely considered in
optimization yet. There are only a few studies that aim to find solutions
that are both of high quality and explainable. In recent work, 
explainability for optimization was defined in a data-driven manner:  A  solution is considered
explainable if it closely resembles solutions that have been used in the past under similar circumstances.
To this end, it is crucial to identify a preferably small subset of
features from a presumably large set that can be used to measure instance similarity.
In this work, we formally define the feature
selection problem for explainable optimization and prove that its decision version is
NP-complete. We introduce mathematical models for optimized feature
selection. As their global solution requires significant
computation time with modern mixed-integer linear solvers, we employ
local heuristics. Our computational study using data that reflect
real-world scenarios demonstrates that the
problem can be solved practically efficiently for instances of
reasonable size.


}%

\medskip
\noindent \textbf{Keywords:} explainable optimization; feature selection; data-driven optimization


\section{Introduction}

\subsection{Motivation}

Mathematical optimization plays a crucial role in solving complex
real-world problems by providing optimal solutions, even
for large-scale instances. The research focus has traditionally been
on achieving optimality, basically neglecting explainability of
obtained solutions. In applications where human beings are affected, it is however essential not only to determine
optimal outcomes but also to ensure that these solutions are
explainable and understandable to decision-makers.

Without explanation, decision-makers may not
trust or understand the reasoning behind the outcomes. If the process of finding a solution is perceived as a black box, it risks being ignored, even if the solution itself is globally optimal. Explanations bridge the gap between mathematical rigor and practical usability, increasing the likelihood of their real-world adoption.

In this paper, we utilize the data-driven framework presented in \cite{aigner2024framework}, where a solution is considered
explainable if it closely resembles favorable solutions that have been used in the past under similar circumstances. 
The approach works as follows:
A set of instances along with historical solutions is given. Each instance is described via a set of features, such as resource availability, cost information, or contextual factors, like weather conditions or seasonality. A metric to quantify pairwise distances between instances is provided. Similarly, each solution is characterized by a set of features, including key indicators, structural properties, or other relevant attributes, with a corresponding metric to measure the similarity between solutions. To express the explainability of a solution, the most similar historical instances are identified based on instance feature distances within a predefined threshold. The explainability score is then computed as the sum of the solution feature distances between the current solution and the solutions associated with these most similar historical instances.


In \cite{aigner2024framework}, the instance features are assumed to be given as input. We extend this work by investigating how to optimally select only few high-quality features from a set of instance features, which are then used
 to measure instance similarity. This will allow for more sparse explanations, thereby improving comprehensibility by relying on only a small number of features.
The corresponding instance feature selection problem can be solved as a preprocessing step, after which the existing explainable optimization framework can be utilized.
While feature selection is a standard task in Artificial Intelligence (AI), only few works have considered this problem in optimization-related tasks, often integrated within some optimization model. In our setting, feature selection is not primarily concerned with eliminating redundant features or minimizing information loss. Instead, we aim to select features such that instances that are similar in the selected feature space also have similar solutions---offering a different perspective on what constitutes a good selection of features.

\emph{Our contributions:}
\begin{itemize}
\item We give a formal definition of the feature selection problem in explainable mathematical optimization.

\item We clarify its complexity by proving that it is NP-complete in general.

\item We present mixed-integer linear mathematical optimization models for optimal feature selection.

\item Computational results for shortest path problems with realistic data show that typically a small set of features suffices to explain a solution.

\item We demonstrate that the feature selection problem can be solved satisfactorily by local search heuristics for large-scale instances.

\end{itemize}

\emph{Outline:} We summarize the state-of-the-art in the
relevant literature on feature selection in
Section~\ref{sec:literature} before discussing the formal problem setting in Section~\ref{sec:setting}.
To ensure being self-contained, this also includes the framework of \cite{aigner2024framework} for data-driven explainability. In Section~\ref{sec:theory}, we show that the feature selection problem we study is NP-complete. Building on this, Section~\ref{sec:methods} discusses solution methods, beginning with
mathematical optimization models. As their global solution is typically too time-consuming, we also explain our local $K$-opt heuristic. 
Computational results are presented in
Section~\ref{sec:results}. Finally, we conclude with a short summary and future research
questions in Section \ref{sec:conclusion}. 


\subsection{Related Literature}\label{sec:literature}

Explainability has long been neglected in mathematical optimization but has gained increased attention in the past five years. One reason for this neglect might be that theoretically substantiated optimality conditions already provide proof that a found solution is optimal, but they lack comprehensibility for non-experts. Furthermore, sensitivity analysis can be seen as a tool for obtaining insights into a found solution but requires a strong mathematical background and understanding of the optimization model.
Consequently, the mathematical optimization community has become more aware that AI techniques used for optimization should be more comprehensible~\citep{EJOR_Special_Issue,de2023explainable} and several approaches evolved that use mathematical optimization  to increase explainability and interpretability in AI \citep{Aghaei_Azizi_Vayanos_2019,carrizosa2024mathematical}.
Methods to enhance the explainability of solutions obtained through classical optimization have only recently emerged, often drawing inspiration from AI.

Initial efforts in this domain focused on specific applications, such as argumentation-based explanations for planning~\citep{collins2019towards, oren2020argument} and scheduling problems~\citep{vcyras2019argumentation, vcyras2021schedule}. More general approaches for dynamic programming~\citep{erwig2021explainable}, multi-stage stochastic optimization~\citep{tierney2022explaining}, and linear optimization~\citep{kurtz2024counterfactual} followed. Furthermore, data-driven approaches, which require historic or representative instances, have been proposed to enhance explainability \citep{aigner2024framework,forel2023explainable}. In the special case of multi-objective optimization problems, where researchers and practitioners already understood the necessity of being able to provide explanations to decision-makers, explainability plays a particularly critical role \citep{sukkerd2018toward, misitano2022towards, corrente2024explainable}. For recent developments in the field of explainability in optimization, we also refer to the website by \cite{expopt}.

In the field of metaheuristics, efforts to improve explainability have led to approaches such as the use of surrogate fitness functions to identify key variable combinations influencing solution quality to rank variable importance \citep{wallace2021towards}.
In a paper by \cite{fyvie2023explaining}, the authors analyze generations of solutions from population-based algorithms to extract explanatory features for metaheuristic behavior. By decomposing search trajectories into variable-specific subspaces, they rank variables based on their influence on the fitness gradient, providing insights into the algorithm’s search dynamics. Building on this approach, candidate solutions are used to identify problem subspaces that contribute to constructing meaningful explanations \citep{10.1007/978-3-031-77918-3_13}. Recent advances in hyper-heuristics leverage visualizations to explain heuristic selection dynamics, usage patterns, parameterization, and problem-instance clustering across various optimization problems \citep{yates2024explainable}. Interestingly, the explainability of heuristics connects the concepts of explainability and interpretability, the latter being concerned with making the solution process—rather than the resulting solution—comprehensible \citep{rudin2019stop,GOERIGK2023Interpretable}.

On the general topic of feature selection, a substantial body of literature exists. We refer the interested reader to the numerous surveys on the topic, e.g.  \cite{li2017feature,rong2019feature,dhal2022comprehensive}.
Here, we concentrate on contributions that are specifically relevant to our context, i.e., that either utilize mathematical optimization models to select relevant features, or focus on the use of features in a mathematical optimization setting. 

The idea of using mathematical models for tackling the problem of selecting relevant features is not new. Early approaches aimed at finding planes to separate two point sets with few non-zero entries \citep{bradley1998feature}. This was extended for more general support vector machines \citep{NGUYEN2010584,LABBE2019}. Furthermore, optimization models to select features for additive models \citep{navarro2025feature} and neural networks \citep{zhao2023model} have been developed. In \citep{bugata2019weighted}, the authors introduce a  feature selection method using $k$-nearest neighbors with attribute and distance weights optimized via gradient descent. This approach predicts relevant features, selecting the top features based on optimized weights that directly influence prediction accuracy. It is also noteworthy that already for the process of finding relevant features, approaches have been developed that aim at maintaining explainability \citep{zacharias2022designing} or that balance explainability and performance \citep{lazebnik2024algorithm}.

In the pursuit of making optimization more comprehensible, the role of meaningful features has recently gained importance, whereas their significance for both the explainability of a solution as well as the effectiveness of the solution process has long been acknowledged in AI \citep{janzing2020feature,johannesson2023explanatory}.  Notably, the term ``features'' is also sometimes referred to as contextual information, covariates, or explanatory variables. Feature-based approaches have been developed to enhance both the explainability \citep{aigner2024framework} and interpretability \citep{goerigk2024feature} of mathematical optimization.
For the data-driven newsvendor problem, a bilevel optimization approach for feature selection is proposed in \cite{SERRANO2024703}, where the leader validates a decision function and the follower learns it. The leader can restrict the decision function’s non-zero entries, influencing the feature selection. Unlike our approach, their focus is on improving out-of-sample performance and while their method results in explainable solutions, the primary goal is performance enhancement rather than explicit explainability.

\section{Problem Setting\label{sec:setting}}

\subsection{Framework for Explainability in Mathematical Optimization}



We first summarize the framework for data-driven explainability in mathematical optimization \citep{aigner2024framework}.
Consider a general mathematical optimization problem, without imposing assumptions on its specific structure or domain, so that the formulation can be applied to various settings.
 Let $\I$ denote the set of all instances for this problem, and let $\mathcal{X}$ represent the general domain of vectors that may be feasible. For each instance $I \in \I$, let $\X(I) \subseteq \mathcal{X}$ be the set of feasible solutions corresponding to $I$.
We denote by $f^I : \X(I) \rightarrow \mathbb{R}$ the objective function of instance $I$, and by $f^I(x)$ the objective value of a solution $x \in \X(I)$. The \emph{nominal} optimization problem for instance $I$, which seeks to minimize the objective function over its feasible set, is given by
\begin{equation}
    \min_{x \in \X(I)} f^I(x). \label{Eq:nom_problem} \tag{Nom}
\end{equation}
This formulation is very general in the sense that it can be applied to different types of optimization problems. It can accommodate both (discrete-)continuous as well as combinatorial problems such as the knapsack problem with any number of items or the shortest path problem with various graph structures.

The data-driven framework for explainability in mathematical optimization relies on the existence of high quality data consisting of historic instance-solution pairs.
Assume $N \in \mathbb{N}$ historic data points $(I^i, x^i)_{i \in [N]}$ are given, where $I^i \in \mathcal{I}$ is a full description of the $i$-th historic instance and $x^i \in \mathcal{X}(I^i)$ is the employed solution.  In the original framework, each data point carried an additional weight indicating whether the implemented solution was viewed positively or negatively for that instance. For this paper, we omit these weights and simply assume all historic data points have been 
assigned the same weight.

We consider feature functions $\instfeaturemap[]:\I\rightarrow \instfeaturespace$ and $\solfeaturemap[]:\I \times \X \rightarrow \solfeaturespace$ 
that aggregate information of instances as well as solutions within features spaces $\instfeaturespace \subseteq \mathbb{R}^p$ and $\solfeaturespace \subseteq \mathbb{R}^q$, respectively. We define two metrics $d_{\I}:\instfeaturespace \times \instfeaturespace \rightarrow \mathbb{R}_{\ge 0}$ and $\soldist[]:\solfeaturespace \times \solfeaturespace \rightarrow \mathbb{R}_{\ge 0}$ as (dis)similarity measures for instances and solutions, respectively.

Now, consider a new instance $\Inew \in \I$ for which we want to find an explainable solution.
Let
\begin{equation}
	\sEps[(\Inew)]=\{i \in [N] \mid d_\I(\instfeaturemap[](\Inew),\instfeaturemap[](I^i)) \leq \epsilon\}
	\label{s_eps}
	\tag{Sim-Set}
\end{equation} 
be the set containing the most similar historic instances with threshold $\epsilon\geq 0$, where $[N]:=\{1,\ldots,N\}$ denotes the index set.

The following bicriteria optimization model was proposed:
\begin{equation}
	\min_{x \in \X(\Inew)} \ \left( f^{\Inew}(x), \sum_{i \in \sEps[(\Inew)]}\frac{d_{\X}\left( \solfeaturemap[](\Inew,\x),\solfeaturemap[](I^i,\x^i)\right)}{1+ d_\I\left(\instfeaturemap[](\Inew),\instfeaturemap\right)}
	\right). \tag{Exp}\label{exp_formula}
\end{equation}

The first objective is the objective function of the nominal problem.
The second objective represents the explainability of solution $x$.
To calculate this value, consider all similar historic instances in $I^i \in \sEps[](\Inew)$ with solutions $x^i$.
We would like to obtain a solution that is similar to the historic solutions $x^i$ for each  instance $I^i$.
Similarity is measured in the solution feature space $\solfeaturespace$ using the distance $\soldist[(\solfeaturemap[(\Inew,\solution)],\solfeaturemap[(I^i,\solution^i)])]$.
This definition of explainability means that a decision-maker can point to historic examples, and explain the current choice based on similar situations in which similar solutions were chosen.

A key assumption of the approach is that the used
features indeed represent easily comprehensible aspects of both the
instance and the solution.
Due to the subjectivity of explainability
itself, quantifying this property is nontrivial in general. 
Furthermore, it is not clear what features are suitable to represent instance similarity. 
In this work, we determine optimal instance features to make good
solutions well explainable. To this end, we next present an example for the feature selection problem considered here. Subsequently, we formally introduce the feature selection problem in explainable optimization in Section~\ref{sec:theory}.

\subsection{Example for Feature Selection in Explainable Optimization}

To provide a clearer idea of the concepts to come, we present a detailed, prototypical example based on the classical knapsack problem, which serves to illustrate the feature selection task in our framework. Assume we should allocate some spending budget to a range of potential projects, which can be grouped into three sectors: infrastructure, education, and healthcare. Each project is associated with certain costs and benefits, reflecting, for example, its societal impact, contribution to public satisfaction, or financial return.
The task is to select a subset of projects that maximizes the overall benefit without exceeding the available budget. In order to be able to explain the spending plan, we utilize the framework summarized in the previous section. To this end, distance measures are required for both instances and solutions, based on their respective feature representations. We consider two solution features that are used to describe implemented solutions:
\begin{itemize}
	\item (numeric) solution feature $\phi_{\mathcal{X},1}: \mathcal{I}\times \mathcal{X} \rightarrow [0,1]$: project selection rate, i.e., the number of selected projects divided by the total number of proposed projects.
	\item (binary) solution feature $\phi_{\mathcal{X},2}: \mathcal{I}\times \mathcal{X} \rightarrow \{0,1\}$: indicator whether at least half of the projects from the sector with the highest cost-benefit ratio are selected in the solution, reflecting whether the solution includes a significant portion of the most valuable group.
\end{itemize}
For now, we assume that instance similarity is purely measured via the available budget.
Then, an explanation might look as follows: ``The last time we had a budget this large, we supported 40\% of all proposed projects, yet funded fewer than half of the projects from the sector that seemed most beneficial. Hence, we should do it like this again.''

Table~\ref{tab-example:instance_description} shows data from $N=4$ situations in the past where the implemented budget allocation was viewed favorably. It contains information on both the instance (costs $c$, benefits $b$, and budget given in arbitrary units) as well as the implemented solution (selected projects). 

\begin{table}[htbp]
	\caption{Description of the four historic instances $I^i \in \mathcal{I}$ with benefits $b$ and costs $c$, as well as the implemented solution $x^i$ (projects highlighted in gray were selected). The last row contains a new instance we want to find an explainable solution for. There, the marked projects maximize the overall benefit,  but this solution might be hard to explain.
		\label{tab-example:instance_description}} 
	\centering
	\begin{tabular}{c|c@{\hspace*{2em}}c@{\hspace*{2em}}c|c|cc}
		& \multicolumn{3}{c|}{projects} & budget&\multicolumn{2}{c}{solution features}\\
		& infrastructure & education & healthcare && $\phi_{\mathcal{X},1}$&$\phi_{\mathcal{X},2}$\\
		\hline
		$I^1$ \quad $\bc{b}{c}$ 
		& \bc{2}{1}\quad \bc{2}{1}\quad \bc{4}{2}
		& \selbc{9}{3}\quad \bc{9}{4} \quad \bc{10}{5}
		& \bc{11}{5}\quad \selbc{7}{2} 
		& 5 & 0.25 & 0\\[.55cm]
		
		$I^2$ \quad $\bc{b}{c}$ 
		& \bc{15}{10}\quad \selbc{22}{7}\quad \bc{21}{8}
		& \bc{20}{8}\quad \selbc{16}{5}
		& \bc{8}{5}\quad \bc{9}{4} \quad \bc{11}{4}
		& 14& 0.25 & 0\\[.55cm]
		$I^3$ \quad $\bc{b}{c}$ 
		& \selbc{2}{1}\quad \bc{3}{3}
		& \selbc{5}{2}\quad \selbc{4}{2}\quad \bc{6}{3}
		& \bc{3}{2}\quad \bc{4}{3} \quad \selbc{3}{1}
		& 6& 0.5 & 1\\[.55cm]
		
		$I^4$ \quad $\bc{b}{c}$ 
		& \selbc{9}{5}\quad \selbc{3}{1}\quad \bc{3}{4}
		& \bc{6}{4}
		& \selbc{8}{3}\quad \selbc{7}{3}\quad \bc{9}{7} 
		& 12& 0.57 & 1\\[.55cm]\hline
		$\Inew$ \quad $\bc{b}{c}$ 
		& \selbc{3}{2}\quad \selbc{6}{3}\quad \selbc{10}{5} \quad \bc{10}{6}
		& \bc{11}{6} \quad \bc{5}{3}
		& \bc{11}{8}\quad \selbc{13}{6} 
		& 16 &  0.50 & 1\\
	\end{tabular}   
\end{table}

Using the two solution features, we can compute pairwise distances between the historic solutions, e.g., using the 1-norm as shown in Table~\ref{tab-example:solution features distance}. We observe that instances~$I^1$ and~$I^2$, as well as~$I^3$ and~$I^4$, have highly similar solutions.\\

\begin{table}[htbp]
	\centering
	\caption{Solution distances of historic solutions using the 1-norm. 
		\label{tab-example:solution features distance}}
	\begin{tabular}{c||c|c|c|c}
		$i$,$j$ & 1 & 2 & 3 & 4 \\
		\hline\hline
		1 & & 0 & 1.25 & 1.32\\
		\hline
		2 & 0 &  & 1.25 & 1.32 \\
		\hline
		3 & 1.25 & 1.25 & & 0.07\\
		\hline
		4 & 1.32 & 1.32 & 0.07 &
	\end{tabular}
\end{table}

Now assume we encounter a new instance, shown at the bottom of Table~\ref{tab-example:instance_description}: It is the start of a new year and the spending plans must be specified. Calling a mathematical optimization solver on the knapsack instance yields the solution specified in the table with an overall benefit of $32$. Is this solution explainable? If we compare this solution to the solution implemented in the most similar instance from the past, we consult instance $I^2$, as here the difference between the budgets are smallest (we recall that we currently compare instances solely based on their budget value). However, both solutions have rather different characteristics as shown by their respective features, resulting in a solution distance of 1.25. Hence, with respect to the solution similarity (and hence the explainability), a better solution would be to select both healthcare projects and the low-cost infrastructure project, yielding an overall benefit of $27$, solution features $\phi_{\mathcal{X},1}=0.375$ and $\phi_{\mathcal{X},2}=0$, and a distance of $0.125$ to the solution implemented in instance $I^2$.


This discussion already shows that it is not trivial to decide which instances should be considered ``similar''. It is thus necessary to decide on appropriate instance features for which such an evaluation can be performed.
This work investigates which instance features are most appropriate for assessing instance similarity. As illustrated in the first part of this example, relying solely on the budget as an instance feature may not be an adequate approach. Let us assume that a list of reasonable instance features is available. For this example let there be five such features, for which the respective feature values of the five instances are shown in Table~\ref{tab-example:instance features}:

\begin{itemize}
	\item numeric instance feature $\phi_{\mathcal{I},1}$: available budget,
	\item categorial instance feature $\phi_{\mathcal{I},2}$: project group with the highest average cost–benefit ratio,
	\item binary instance feature $\phi_{\mathcal{I},3}$: indicates whether the overall average cost–benefit ratio exceeds~2,
	\item numeric instance feature $\phi_{\mathcal{I},4}$: overall number of proposed projects,
	\item numeric instance feature $\phi_{\mathcal{I},5}$: ratio between the average benefits of the best and the second-best group.
\end{itemize}

\begin{table}[htbp]
	\centering
	\caption{Instance feature values\label{tab-example:instance features}}
	\begin{tabular}{c|c|c|c|c|c}
		& $\phi_{\mathcal{I},1}$ & $\phi_{\mathcal{I},2}$ & $\phi_{\mathcal{I},3}$ & $\phi_{\mathcal{I},4}$ & $\phi_{\mathcal{I},5}$ \\
		\hline
		$I^1$ & 5 & healthcare & 1 & 8 & 1.04 \\
		$I^2$ & 14 & education & 1 & 8 & 1.07 \\
		$I^3$ & 6 & education & 0 & 8 & 1.50 \\
		$I^4$ & 12 & healthcare & 0 & 7 & 1.33 \\
		\hline
		$\Inew$ & 16 & infrastructure & 0 & 8 & 1.50
	\end{tabular}
	
\end{table}

We aim to identify a subset of instance features such that instances that are similar with respect to these features also yield similar solutions, revealing meaningful similarities and dissimilarities within the dataset.
This subset is derived solely from historical data and serves as a general similarity measure that can be applied to new, unseen instances, rather than being fine-tuned for each individual case. 
In this example, we consider selecting two out of the five available instance features. We note that the explanation becomes easier to follow---though not necessarily more consistent---when the number of features used in the explanation is kept small.

In this illustrative example, the most informative features are instance features $\phi_{\mathcal{I},3}$ and $\phi_{\mathcal{I},5}$. When similarity between instances is measured based on these two features, instance~$I^1$ is closest to~$I^2$ (and vice versa), while instance~$I^3$ is closest to~$I^4$. This is desirable, as these pairs also exhibit highly similar solutions. When these features are used to assess similarity between new and historical instances, the most similar historical instance to the new instance in our initial example is $I^3$, with a distance of $0$. It turns out that the optimal knapsack solution of the new instance (see Table~\ref{tab-example:instance_description}) has the same features as the solution implemented in instance $I^3$. Therefore, if instance features $\phi_{\mathcal{I},3}$ and $\phi_{\mathcal{I},5}$ are used to measure instance similarity, the optimal knapsack solution also corresponds to the best explainable solution. The next section formalizes these considerations.

\subsection{Feature Selection in Explainable Optimization}

Suppose we are given $p \in \mathbb{N}$ candidate instance features. Our goal is to select at most $L \in \mathbb{N}$ of them, so that instance similarity---measured using only the selected features---aligns well with similarity between the corresponding solutions.

To this end, consider $N$ historic data points $(I^i, x^i)_{i \in [N]}$. Let $\selectionset \subseteq [p]$ with $|\selectionset| \leq L$ denote the set of selected instance features. We write $d_{\mathcal{I}}^{\selectionset}$ for the projection of the function $d_{\mathcal{I}}$ onto the feature coordinates indexed by $\selectionset$. For ease of notation, we write $d_\I^{\selectionset}(i,j)$  for the instance distance between data points $i,j \in [N]$ under this projection.

This way, the selection of features determines the distances measured between instances and, consequently, the $k$ most similar instances associated with each data point. Since these most similar instances provide the reference solutions used to assess explainability, changing which instance features are selected directly influences the solution distances that enter the explainability objective.


\begin{definition}[Strict neighbors, borderline neighbors, neighbors]~\\
Let $\varepsilon > 0$ and a feature selection $\selectionset \subseteq [p]$ be given.
For any data point $(I^i, x^i)$, we define the set of \emph{strict neighbors} as
\[
\sEpsStrict
  = \{ j \in [N] \setminus \{i\} \mid d_{\mathcal{I}}^{\selectionset}(i,j) < \varepsilon \},
\]
the set of \emph{borderline neighbors} as
\[
\sEpsBorder
  = \{ j \in [N] \setminus \{i\} \mid d_{\mathcal{I}}^{\selectionset}(i,j) = \varepsilon \},
\]
and the set of \emph{neighbors} as
\[
\sEpsNeighbour
  = \mathcal{N}^{<}_{\varepsilon}(i,\selectionset) \cup \mathcal{N}^{=}_{\varepsilon}(i,\selectionset).
\]
\end{definition}


We now compare the solution $x^i$ of any data point $(I^i, x^i)$ with the solutions of the $k$ instances most similar to $I^i$ from the data. To this end, we further simplify notation and write $d_\X(i,j) = d_\X(\phi_\X(I^i,x^i),\phi_\X(I^j,x^j))$ to denote the solution distance of any two data points $i,j\in [N]$.
 It may happen, that the $k$ nearest neighbors are not uniquely determined because several instances have the same distance to $I^i$. To resolve this ambiguity, we use two possible tie-breaking rules: we either select those instances whose solutions are most similar to $x^i$ (the optimistic case), or those whose solutions differ the most (the pessimistic case).

\begin{definition}[Optimistic neighbors, pessimistic neighbors\label{def:neighbors}]
Let a selection of features $\selectionset\subseteq[p]$ be given and consider some data point $(I^i, x^i)$. Let $k \in \mathbb{N}$ be fixed and $\varepsilon > 0$ such that $|\sEpsStrict| < k \leq |\sEpsNeighbour|$. Let $\bar{k}=k-|\sEpsStrict|$ be the number of instances that must be selected from the borderline neighbors.
We define the set of \emph{optimistic neighbors} with respect to $\selectionset$ by
\[ \skOpt = \sEpsStrict \cup \argmin_{\substack{J \subseteq \sEpsBorder \\ |J| = \bar{k}}} \sum_{j\in J} d_\X(i,j). \]
We define the set of \emph{pessimistic neighbors} with respect to $\selectionset$ by
\[ \skPess = \sEpsStrict \cup \argmax_{\substack{J \subseteq \sEpsBorder \\ |J| = \bar{k}}}  \sum_{j\in J} d_\X(i,j). \]
If the minimizer or maximizer is not unique, an arbitrary tie-breaking rule is applied.
\end{definition}

We note that the pessimistic and optimistic problem versions coincide if all instance distance values $d_{\mathcal{I}}^{\selectionset}(i,j)$ are distinct (which gives a unique sorting), or if all solution distance values $d_\X(i,j)$ for $j \in \sEpsBorder$ are equal.

\begin{table}[htbp]
\centering
\caption{Notation of \fspeo.}\label{table:sets_params_vars}
\begin{tabular}{r|p{.55\textwidth}}
\textbf{Name} & \textbf{Description} \\
\hline
$\I$ & instance space \\
$\X$ & solution space\\
$N \in \mathbb{N}$ & number of historic data points  \\
\hline
$p \in \mathbb{N}$ & number of available instance features \\
$L \in \mathbb{N}$ & number of instance features to select \\
$ \selectionset \subseteq [p] $ & indicator set of selected instance features \\
$q \in \mathbb{N}$ &number of solution features \\
$\instfeaturespace \subseteq \mathbb{R}^p$ & instance feature space \\
$\F_\X \subseteq \mathbb{R}^q$ & solution feature space \\
$\phi_\I : \I \to \F_\I$ & instance feature map \\
$\phi_\X : \I \times \X \to \F_\X$ & solution feature map \\
$k \in \mathbb{N}$ & instance neighborhood size \\
$d_{\I}(i, j)$
& instance distance with respect to all features \\
$d_{\mathcal{I}}^{\selectionset}(i,j)$
& instance  distance with respect to selected features \\
$\soldist$
& solution  distance \\
\hline
$\sEpsStrict$/$\sEpsBorder$/$\sEpsNeighbour$   & set of strict/borderline/admissable neighbor instances of $I^i$\\
$\skGen^{\textnormal{opt}}, \skGen^{\textnormal{pess}}$ & set of $k$-nearest admissable optimistic/pessimistic neighbor instances of  $I^i$
\end{tabular}
\end{table}

The main symbols of our notation are summarized in Table~\ref{table:sets_params_vars}. 
We now formally define the feature selection problem as follows. 

\begin{definition}[Instance feature selection problem in explainable optimization]
	\label{def:FSP-EO}
Let $N$ historic data points $(I^i, x^i)_{i \in [N]}$, $k\in\mathbb{N}$, and $L\in\mathbb{N}$ be given.

The \emph{optimistic Instance Feature Selection Problem for Explainable Optimization} (\textbf{\fspeo}) is given by

\[
\min_{\substack{\selectionset\subseteq [p]\\ |\selectionset|\leq L}} \sum_{i\in[N]}
      \sum_{j\in \skOpt } d_{\mathcal{X}}(i,j).
\]

The \emph{pessimistic} \textbf{\fspeo} is given by

\[
\min_{\substack{\selectionset\subseteq [p]\\ |\selectionset|\leq L}} \sum_{i\in[N]}
      \sum_{j\in \skPess} d_{\mathcal{X}}(i,j).
\]
\end{definition}

\begin{example}
Given $N=3$ historic data points, we determine the optimal feature selection for the pessimistic and optimistic case. Figure~\ref{fig:exampleSk} illustrates the data points, consisting of shortest path instances and the corresponding optimal solutions.

\begin{figure}[h]
	    \begin{minipage}{0.33\textwidth}
		\centering
		\begin{tikzpicture}[node distance=1.7cm]
			\node[circle, draw, red] (S) {$s$};
			\node[circle, draw, red, right=of S] (T) {$t$};

			\draw[->, blue, line width=0.5mm, bend left=30] (S) to node[above]{$c_1=1$} (T);
			\draw[->, bend right=30] (S) to node[below]{$c_2=1.4$} (T);
			
			\node[above=0.5cm of S, anchor=south] (w1) {};
			\node[below=0.5cm of S, anchor=north] (w2) {};
			\path (S) -- node[midway, gray] {\large\(\instance^1\)} (T);
		\end{tikzpicture}
	\end{minipage}%
	\hfill
	\begin{minipage}{0.33\textwidth}
		\centering
		\begin{tikzpicture}[node distance=1.7cm]
			\node[circle, draw, red] (S) {$s$};
			\node[circle, draw, red, right=of S] (T) {$t$};

			\draw[->, bend left=30] (S) to node[above]{1.9} (T);
			\draw[->, blue, line width=0.5mm, bend right=30] (S) to node[below]{1.5} (T);
			
			\node[above=0.5cm of S, anchor=south] (w1) {};
			\node[below=0.5cm of S, anchor=north] (w2) {};
			\path (S) -- node[midway, gray] {\large\(\instance^2\)} (T);
		\end{tikzpicture}
	\end{minipage}%
	\hfill
	\begin{minipage}{0.33\textwidth}
		\centering
		\begin{tikzpicture}[node distance=1.7cm]
			\node[circle, draw, red] (S) {$s$};
			\node[circle, draw, red, right=of S] (T) {$t$};

			\draw[->, bend left=30] (S) to node[above]{3} (T);
			\draw[->, blue, line width=0.5mm, bend right=30] (S) to node[below]{1.4} (T);
			
			\node[above=0.5cm of S, anchor=south] (w1) {};
			\node[below=0.5cm of S, anchor=north] (w2) {};
			\path (S) -- node[midway, gray] {\large\(\instance^3\)} (T);
		\end{tikzpicture}
	\end{minipage}

\caption{Illustration of four historic data points in a shortest path setting. Edge labels indicate costs. The shortest $s$-$t$ path in each instance is highlighted in blue.}
\label{fig:exampleSk}
\end{figure}

The instance features are given by \(\phi_{\mathcal{I},1}=c_1\) and \(\phi_{\mathcal{I},2}=c_2\), and we define \(d_{\mathcal{I}}^{\selectionset}(i,j)\) as the absolute difference between the respective feature values. Consequently, we have \(p=2\) instance features and aim to select \(L=1\) of them. We further set \(k=1\), meaning that for each instance we only consider its nearest neighbor. The solution feature distance is set to 0 if two instances share the same solution and to 1 otherwise.

We now compute the objective value of \fspeo for both possible feature selections, each evaluated under the optimistic and the pessimistic case. We start by selecting the upper-edge feature. For each instance, we determine its nearest neighbor according to the selected feature and sum up the corresponding solution feature distances. For \(\instance^1\), the most similar instance is \(\instance^2\) with distance \(|1 - 1.9|\). Because their solutions differ, the resulting solution-feature distance is~1, both in the optimistic and in the pessimistic case.
For \(\instance^2\), the closest instance is \(\instance^1\) which leads to another increase of the objective value by 1 in both cases.
\(\instance^3\) has \(\instance^2\) as closest neighbor and as they share the same solution the objective value is not changed by that.

We now repeat the same procedure for the lower-edge feature. 
\(\instance^1\) and \(\instance^3\) share the same edge weight so they are closest to each other. As they differ in their solutions both results in a contribution of~1.
For \(\instance^2\) we have to distinguish between the optimistic and pessimistic case as both instances \(\instance^1\) and \(\instance^3\) are contained in $\sEpsBorder[]$. In the optimistic case we select \(\instance^3\), which minimizes the solution feature distance since both instances share the same solution. In the pessimistic case, we instead select \(\instance^1\),as it yields the largest solution feature distance among the candidates with equal instance feature distance. Consequently, the total distance contributed by \(\instance^2\) is 0 in the optimistic case but 1 in the pessimistic one.

Table~\ref{tab:exampleSk} summarizes the resulting computations for all combinations of feature selection and optimistic/pessimistic evaluation.

\begin{table}[h]
	\centering
	\begin{tabular}{c|c|c}
		& optimistic & pessimistic \\
		\hline
		upper edge & $1+1+0=\mathbf{2}$ & $1+1+0=\mathbf{2}$\\
		\hline
		lower edge & $1+0+1=\mathbf{2}$ & $1+1+1=\mathbf{3}$
	\end{tabular}
	\caption{Evaluation of the objective function of \fspeo. Each summand represents one instance and the solution distance to its closest neighbor with respect to the selected features.}
	\label{tab:exampleSk}
\end{table}


Consequently, when solving the optimistic \fspeo, the upper and the lower edge attain the same objective value, resulting in an undesired tie due to the deliberately small number of instances considered in this illustrative example. However, a closer inspection reveals that this identical value is induced by different instances being selected as closest neighbors. Thus, even though the optimistic evaluation yields the same outcome in this example, the underlying feature choice still differs. In contrast, the pessimistic \fspeo leads to a unique selection of the upper edge.

\end{example}

\section{Theory on Feature Selection for Explainable Optimization}
\label{sec:theory}



We show that the \fspeo is NP-hard by reduction from the Maximum
Coverage Problem, which is a generalization of the well-known
NP-complete Set Covering Problem. This implies that it is unlikely to find a polynomial-time solution algorithm for the \fspeo, and justifies to model the problem as mixed-integer program or to aim for solution heuristics. In the following proof, the pessimistic and optimistic problem versions coincide, so the hardness result holds for both versions.

\begin{theorem}\label{theorem:complexity}
The decision variant of the \fspeo is NP-hard.
\end{theorem}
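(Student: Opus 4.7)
The plan is to reduce the Maximum Coverage Problem---given a universe $U = \{e_1, \dots, e_n\}$, subsets $S_1, \dots, S_m \subseteq U$, and integers $L, K$, decide whether some $L$ of the sets cover at least $K$ elements---to the decision version of the \fspeo. Since Set Cover is a special case and already NP-complete, this yields the desired hardness; NP-membership is immediate, because evaluating the objective for a fixed $\selectionset$ requires only polynomially many distance computations.

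I would build a \fspeo instance in which the $m$ candidate instance features correspond one-to-one to the sets $S_1, \dots, S_m$, and the historic data points are organised into $n$ \emph{gadgets}, one per element $e_i$. Each gadget contains a small number of instances and associated solutions chosen so that, under any feature selection $\selectionset$ with $|\selectionset|=L$, the gadget contributes $0$ to the \fspeo objective when at least one selected feature $j \in \selectionset$ satisfies $e_i \in S_j$ (i.e.\ $e_i$ is covered), and contributes a fixed positive amount otherwise. With $k=1$ and the $\ell_1$ metric, a concrete realisation is a pair of \emph{twin} data points whose feature-space distance equals the number of selected sets \emph{not} containing $e_i$, together with a \emph{reference} data point at a constant distance strictly between the covered case ($\le L-1$) and the uncovered case ($=L$). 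The twins share a solution (contribution $0$), while the twin and the reference carry distinct solutions (contribution $1$), so the gadget's contribution encodes whether $e_i$ is covered.

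To replace the constraint $|\selectionset| \le L$ by the cleaner $|\selectionset| = L$ used in the calibration above, I would augment the feature set with $L$ additional dummy features that take a constant value across all data points; since such features contribute zero to every pairwise distance, any selection of fewer than $L$ non-dummy features can be padded to size $L$ without altering the objective. Summing the gadget contributions, the total \fspeo objective becomes a fixed constant plus a positive constant times the number of elements left uncovered by $\selectionset \cap [m]$. Choosing the \fspeo threshold accordingly, the \fspeo decision question is feasible if and only if the Max Coverage instance is; the construction is clearly polynomial.

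The main obstacle I foresee is verifying that no spurious nearest-neighbour relationships emerge between data points in \emph{different} gadgets under \emph{any} choice of $\selectionset$, since this could introduce uncontrolled cross-terms in the objective. I would address this by separating the gadgets in additional coordinates that are present in \emph{every} candidate feature (for instance, a per-gadget offset added to all of a gadget's feature values), so that the inter-gadget component of any $\ell_1$-distance is a large constant independent of $\selectionset$, forcing each data point's nearest neighbour to lie within its own gadget. Finally, I would choose the feature values so that within every gadget the distances in the "covered" and "uncovered" cases are strictly separated; this guarantees that the argmin in Definition~\ref{def:neighbors} is unique, so the optimistic and pessimistic variants of \fspeo coincide on the constructed instance and the hardness result applies to both.
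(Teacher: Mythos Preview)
Your reduction from Maximum Coverage, with candidate instance features identified with the sets $S_j$, is exactly the paper's starting point, but the two constructions diverge from there. The paper does \emph{not} build isolated per-element gadgets with $k=1$; it uses a single global configuration consisting of a central instance $I_c$, satellite clusters $\{I_i\}$, $\{\bar I_i\}$, $\{\bar I_i^\ell\}$, and takes $k=|U|$. The covered/uncovered information is then read off from which instances land among the center's $k$ nearest neighbours (and symmetrically from the neighbours of each $\bar I_i$), not from independent gadgets. Your modular design is conceptually cleaner, but it carries extra bookkeeping that the proposal does not fully discharge.

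The concrete gap is this: with $k=1$, \emph{every} data point contributes to the objective, so in a three-point gadget $(A,B,R)$ you must control the nearest neighbour of $B$ and of $R$, not only of $A$. With the natural placement (twin $A$ at the origin, twin $B$ at the indicator vector of $\{j:e_i\notin S_j\}$, and $R$ at $\rho\mathbf{1}$ with $\rho\in(1-1/L,1)$ so that $d(A,R)=\ell\rho$ lies strictly between $\ell-1$ and $\ell$), one gets $d(A,B)=\ell-c$ and $d(B,R)=c\rho+(\ell-c)(1-\rho)$, whence $B$'s nearest neighbour is $A$ iff $c>\ell/2$. The covered-case gadget contribution is therefore $1$ or $2$ depending on the coverage multiplicity $c$, and one can build Max-Coverage instances in which a selection covering $K$ elements and one covering $K-1$ elements yield the same \fspeo objective, so no single threshold separates the two. (Also, the claim that the gadget ``contributes $0$ when covered'' cannot hold for any three-point gadget: $R$'s nearest neighbour is always a twin, contributing $1$.) One fix is to place $R$ at $-\rho\mathbf{1}$, on the far side of $A$ from $B$, which forces $B$'s nearest neighbour to be $A$ for all $c$; but this case analysis is absent from the proposal. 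A smaller point: the dummy-feature padding breaks the optimistic variant outright, since selecting only dummies collapses all instance distances to zero and optimistic tie-breaking then yields objective $0$; as the corrected gadget already works for every $|\selectionset|\ge 1$, the dummies should simply be dropped.
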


\begin{proof}
We now reduce from the Maximum Coverage (MC) problem, which is known to be NP-complete \citep{feige1995threshold}.
As its input, a set $U$ of elements to be covered is given, along with a collection of subsets $T_1, T_2, \ldots, T_m \subseteq U$, and numbers $K$ and $W$. We need to decide whether there is a subset $C\subseteq[m]$ with cardinality at most $K$, such that $\cup_{j\in C} T_j$ contains at least $W$ elements. We assume without loss of generality, that all sets $T_j$ are pairwise different.

Given an MC instance, we generate an instance of \fspeo as follows.
For ease of notation, we split the set of historic data points into four different types of
elements (and identify them by their corresponding symbol). We use an instance $I_c$ that we call center instance
together with additional instances. The latter consist of $|U| + 1$
 instances
that we denote as $I_1, ..., I_{|U|+1}$, $|U|$
instances that we denote as $\bar{I}_1, ..., \bar{I}_{|U|}$, and
$|U|+1$  additional instances for each element in $U$ denoted as
$\bar{I}_1^0,\bar{I}_1^1,...,\bar{I}_{|U|}^{|U|-1},\bar{I}_{|U|}^{|U|}$. In
total, we obtain $N=1+|U|+1+|U|+|U|\cdot(|U|+1) = (|U|+2)(|U|+1)$
instances.

We create the following instance feature mappings for the different instances:
\begin{equation}
\instfeaturemap[(I)] := \begin{cases}
0_{m} &\text{ if }I = I_c,\\
\frac{1}{2K}1_m &\text{ if }I = I_i\text{, }i = 1,...,|U|+1,\\
\sum_{j: i \in T_j} e_j &\text{ if }I = \bar{I}_i\text{, }i=1,...,|U|,\\
\sum_{j: i \in T_j} e_j+\frac{1}{3K}1_m &\text{ if }I = \bar{I}^\ell_i\text{, }i=1,...,|U|,~\ell=1,...,|U|+1.
\end{cases}
\end{equation}
We denote by $0_m$ the $m$-dimensional vector containing $0$ in every
component, by $1_m$ the $m$-dimensional vector containing $1$ in every
component, and by $e_j$ the $j$-th unit vector in appropriate dimension.
We further create solution feature mappings for the different
instances as follows:
\begin{equation}
\solfeaturemap[(\solution^I)] := \begin{cases}
0_{|U|+2} &\text{ if }I = I_c,\\
e_i + e_{|U|+2} &\text{ if }I = I_i\text{, }i = 1,...,|U|+1,\\
e_i + 3e_{|U|+2} &\text{ if }I = \bar{I}_i\text{, }i=1,...,|U|,\\
2 e_{|U|+2} &\text{ if }I = \bar{I}^\ell_i\text{, }i=1,...,|U|,~\ell=1,...,|U|+1.
\end{cases}
\end{equation}
We now calculate the distances for $\soldist[]$ in Table~\ref{table:sol_dist}, where we always assume that $i\neq i^\prime$.

\begin{table}[H]
\centering
\begin{tabular}{c|ccccccc}
$\soldist[(\cdot,\cdot)]$ & $\x^{I_c}$ & $\x^{I_{i}}$ & $\x^{I_{i^\prime}}$ & $\x^{\bar{I}_{i}}$ & $\x^{\bar{I}_{i^\prime}}$ & $\x^{\bar{I}_{i}^{\ell^\prime}}$ & $\x^{\bar{I}_{i^\prime}^{\ell^\prime}}$\\
\hline\\[-2ex]
$\x^{I_c}$       &  0 & 2 & 2 & 4 & 4 & 2 & 2\\
$\x^{I_i}$       & 2 & 0 & 2 & 2 & 4 & 2 & 2 \\
$\x^{\bar{I}_i}$ & 4 & 2 & 4 & 0 & 2 & 2 & 2 \\
$\x^{\bar{I}_i^\ell}$ & 2 & 2 & 2 & 2 & 2 & 0 & 0
\end{tabular}
\vspace{.3cm}
\caption{Solution distances. }\label{table:sol_dist}
\end{table}

We note that we set $p=m$.
We complete the instance description by setting $L=K$ and $k=|U|$.
Let us assume that $C\subseteq[m]$ with $1\le|C|\le K$ is some feasible solution of the Maximum Coverage instance. Let $\gamma$ denote the number of elements that are not covered by this solution, i.e., $\gamma := |U \setminus \bigcup_{j \in C}T_j|$. We construct a set $\fsel\subseteq[p]$ by setting $\fsel = C$. By construction, $1\le|\fsel|\le L$. We now show that the objective value of $\fsel$ is equal to
\begin{equation}
2\gamma + 2k + 2(k+1)k + 2\gamma + 2k^2. \label{eq:npcobj}
\end{equation}

We can see this as follows. Each instance contributes to the objective value of the \fspeo by the solution distances of the $k$ closest instances according to the instance distance induced by $\fsel$. In the following, we denote this set of $k$ closest instances for instance $I$ as $\skGen[(I, \fsel)]$,

To calculate the total objective value, we proceed by considering each
instance (or group of instances) individually, finding its $k$ nearest neighbors, and summing the corresponding solution distances.
\begin{itemize}
	\item The center instance $I_c$ has distance $0 < \frac{|\fsel|}{2L}
          \leq \frac{1}{2}$ to instances $I_{i}$. It has distance
          $|\{j\in \fsel: i \in T_j\}|$ to instances $\bar{I}_{i}$.
          For the instances $\bar{I}_i^{\ell^\prime}$, the distance is
           $|\{j\in \fsel: i \in T_j\}| + \frac{|\fsel|}{3L}$.
          This means that the distance is
          $0$ to all instances $\bar{I}_i$ where
          element $i$ is not covered by $\bigcup_{j \in C}T_j$. Exactly $\gamma$ such instances exist. In contrast, the distance is at least $1$ to all instances $\bar{I}_i$ where element $i$ is covered by $\bigcup_{j \in
            C}T_j$. Hence, the $k$ closest neighbors to the center are
          $\gamma$ many instances $\bar{I}_i$ for elements $i$ that
          are not covered, and $k-\gamma$ many instances of other
          types. Each of these $k-\gamma$ neighbors has a solution distance of exactly $2$.
	Together, we obtain that $\sum_{I \in \skGen[(I_c, \fsel)]} \soldist[(\x^{I_c}, \x^{I})] = 4\gamma + 2(k-\gamma) = 2\gamma + 2k$.
	\item The instances $I_i$ always have the other instances $I_{i^\prime}$ as closest neighbors with an instance distance of $0$, independent of the choice of $\fsel$. All other distances are strictly larger than $0$. Hence, $I_i$ contributes a value of $\sum_{I \in \skGen[(I_i, \fsel)]} d_\mathcal{X}(\x^{I_i}, \x^{I}) = 2k$ to the solution distance $\soldist[]$ for all $i = 1,...,k+1$.
	\item Now consider instances $\bar{I}_i$. Their distance is $|\{j\in \fsel: i\in T_j\}|$ to the central instance $I_c$. Furthermore, their distance to $I_{i'}$ is
	\[ (1-\frac{1}{2L})|\{j\in \fsel: i\in T_j\}| + \frac{1}{2L}|\{j\in \fsel: i\notin T_j\} \ge \frac{|\fsel|}{2L} \geq \frac{1}{2L}. \]
	The distance of $\bar{I}_i$ to $\bar{I}_{i'}$ is
	\[ |\{j\in \fsel : i\in T_j, i'\notin T_j\}| + |\{j\in \fsel: i\notin T_j, i'\in T_j\}| \in \mathbb{Z}_{\geq 0}. \]
	There are at most $k-1$ instances $\bar{I}_{i^\prime}$ that have an instance distance of $0$ to $\bar{I}_i$.
	The distance of $\bar{I}_i$ to $\bar{I}^\ell_{i}$ is $\frac{|\fsel|}{3L} \in [\frac{1}{3L}, \frac{1}{3}]$, and finally, the distance to $\bar{I}^\ell_{i'}$ is
	\begin{align*}
		(1-\frac{1}{3L})|\{j\in \fsel: i\in T_j, i'\notin T_j\}|&\\
		+(1+\frac{1}{3L})|\{ j\in \fsel : i\notin T_j, i'\in T_j\}|&\\
		+\frac{1}{3L} (| \{ j \in \fsel: i \in T_j, i^\prime \in T_j \} | + | \{ j \in \fsel: i \notin T_j, i^\prime \notin T_j \} |) & \ge \frac{|\fsel|}{3L} \geq \frac{1}{3L}.
	\end{align*}
	We conclude the following:
	If $i$ is covered by $\cup_{j\in C} T_j$, then the $k$ most similar instances are either $\bar{I}_{i^\prime}$ or $\bar{I}^\ell_i$. Each of these has a solution distance of $2$, so the total distance with respect to $\soldist[]$ is $2k$. If $i$ is not covered, the most similar instances additionally contain the central instance $I_c$. In this case, the distance with respect to $\soldist[]$ is $2k+2$. Summing these values over all instances $\bar{I}_i$, we obtain a total solution distance of $2\gamma + 2k^2$.
	\item Finally, the instances $\bar{I}_i^\ell$ have only instances $\bar{I}_{i^\prime}^{\ell^\prime}$ as close neighbors. The sum of solution distances is $0$.
\end{itemize}


Adding all terms, we obtain Formula \eqref{eq:npcobj}. We conclude that if the MC problem has a solution which contains at least $k-\gamma$ elements, then the \fspeo instance we constructed has a solution with distance with respect to $d_X$ of at most $2\gamma + 2k + 2(k+1)k + 2\gamma + 2k^2$.
We finally see, that if there is a solution to \fspeo with $d_\mathcal{X}$ distance of at most $2\gamma + 2k + 2(k+1)k + 2\gamma + 2k^2$, then this corresponds to a solution of the MC problem with at least $k-\gamma$ elements. The selected features of the \fspeo correspond with the selected subsets of the MC problem.
\end{proof}

This proves NP-hardness of both variants of the \fspeo. Moreover, if the objective value of a selection  $\fsel \subseteq \instfeatures$ can be evaluated in polynomial time---which is the case if all distance functions are computable in polynomial time---the respective variant of \fspeo is NP-complete.

\section{Solution Approaches for the \fspeo}
\label{sec:methods}

We present mixed-integer programming (MIP) models for the optimistic and pessimistic variants of \fspeo in Section~\ref{sec:models}. We then introduce some heuristic algorithms in Section~\ref{sec:heuristic}. In the remainder of this paper, we focus on using the 1-norm for both instance and solution distance for better comprehensibility, but generalizations to other norms are straightforward.

\subsection{Mixed-Integer Programming Models for the \fspeo}
\label{sec:models}

The MIP models presented here for both the optimistic and the pessimistic \fspeo can be solved by available state-of-the-art software from mixed-integer programming. 

\subsubsection{The Optimistic \fspeo}\label{subsec:optimistic_fspeo_mip} 
We recall that the optimistic variant and the pessimistic variant differ in the tie-breaking rule employed in case that several borderline neighboring instances have the same distance: The optimistic variant of the problem is allowed to choose the instances with the lowest solution distance, whereas the pessimistic variant of the problem has to choose the instances with the highest solution distance.

In the following model, the binary variable $b_f$ indicates the selection of instance feature $f\in[p]$.
Continuous variables $d_{ij}$ denote the distance between instances $I^i$ and $I^j$, which depends on the selected features.

The binary variables $y_{ij}$ indicate, whether instance $I^j$ is part of the admissable neighbours of instance $I^i$.
The continuous variables $\epsilon_i$ are forced to be set such that $|\sEpsStrict| < k \leq |\sEpsNeighbour|$, i.e., it represent the threshold value for detecting the addmissable neighbours.
We write $d_f(i,j) = \big|(\phi_{\mathcal{I}}(I^i))_f - (\phi_{\mathcal{I}}(I^j))_f\big|$, which is
the absolute difference between the value of feature $f$ for instances  $I^i$ and $I^j$. The optimization model is given as follows. 
\begin{subequations}\label{problem:optimistic_fspeo}
	\begin{align}
		\label{problem:optimistic_fspeo:obj}\min \quad &\sum_{i=1}^N \sum_{j=1,j\neq i}^N\ y_{ij}\cdot \soldist\\
		\label{problem:optimistic_fspeo:k_neighbors}\text{s.t.} \quad &\sum_{j=1, j\neq i}^N y_{ij} \ge k\quad \forall i\in [N],\\
		\label{problem:optimistic_fspeo:L_features}&1 \le \sum_{f\in\instfeatures} b_f\le L,\\
		\label{problem:optimistic_fspeo:instance_diff}&d_{ij} = \sum_{f\in\instfeatures} b_f\ d_f(i,j) \quad \forall i\neq j\in [N],\\
		\label{problem:optimistic_fspeo:neighboring1}&d_{ij} \le\epsilon_i+(1-y_{ij})M\quad \forall i\neq j\in [N],\\
		\label{problem:optimistic_fspeo:neighboring2}&\epsilon_i\le d_{ij} + y_{ij}M\quad \forall i\neq j\in [N],\\
		\label{problem:optimistic_fspeo:vardef}&b\in\{0,1\}^{p},\; y\in\{0,1\}^{N\times N-1},\; d \in \mathbb{R}^{N\times N-1}_{\geq 0},\;
	    \epsilon\in\mathbb{R}_{\ge 0}^N.
	\end{align}
\end{subequations}

The objective function \eqref{problem:optimistic_fspeo:obj} is the sum over the solution distances of all neighboring instance pairs. 
Constraint~\eqref{problem:optimistic_fspeo:k_neighbors} makes sure that for each instance $I^i$ at least $k$ neighboring instances are selected.
Constraint~\eqref{problem:optimistic_fspeo:L_features} ensures that at most $L$ features are selected, i.e., that at most $L$ variables $b_f$ are set to $1$. A lower bound of $1$ is used to rule out the trivially optimal solution of selecting no features at all, resulting in an objective value of zero.
Constraint~\eqref{problem:optimistic_fspeo:instance_diff} models that the instance distance is equal to the sum of the selected feature distances. This formulation exploits the additivity of the 1-norm across coordinates, allowing the distance between two instances to be expressed as the sum of their featurewise differences. If a different norm is used, this constraint must be adapted accordingly.
Constraint~\eqref{problem:optimistic_fspeo:neighboring1} makes sure that the borderline neighboring instance distance of instance $I^i$, represented by variable $\epsilon_i$, is at least as high as the instance distance of instance $I^j$ to instance $I^i$ for all neighboring instances $I^j$.
Constraint~\eqref{problem:optimistic_fspeo:neighboring2} ensures that the borderline neighboring instance distance of instance $I^i$, represented by variable $\epsilon_i$, is at most as high as the instance distance of instance $I^j$ to instance $I^i$ for all non-neighboring instances $I^j$.
Constraints~\eqref{problem:optimistic_fspeo:vardef} define the domains of the model variables.

This optimistic problem variant may come with some shortcomings in certain settings. For example,
if there exists a meaningless feature (e.g., a feature in which all instances have the value 0), then it is optimal to select only this meaningless feature, as in that case, all instances have the same distance to each other and hence $\sEpsBorder = [N]\setminus\{i\}$, and the optimistic setting allows us to choose the $k$ instances with most similar solutions.
As an alternative, we now discuss the pessimistic problem variant.

\subsubsection{The Pessimistic \fspeo}
Our goal again is to select $L$ features that minimize the overall solution distances for the $k$ closest instances of each instance. Now, however, in case that the closest $k$ instances are not uniquely defined, the worst-performing ones are considered. 

We first study the evaluation problem for the case that the instance distances $d_{ij}$ are fixed to some positive values.
The evaluation problem then reads:
\begin{subequations}\label{problem:eval}
	\begin{align}
		\max \quad & \sum_{i=1}^N \sum_{j=1, j\neq i}^Ny_{ij} \cdot \soldist \\
		\label{problem:eval_bound}\text{s.t.}\quad & \sum_{j=1, j\neq i}^N d_{ij}y_{ij} \leq \min \Big\{ \sum_{j=1, j\neq i}^N d_{ij}y^\prime_{j} \colon \sum_{j=1, j\neq i}^N y^\prime_j \geq k, y^\prime \in [0,1]^{N-1} \Big\} \quad \forall i\in [N],\\
		\label{problem:eval_k}& \sum_{j=1,j\neq i}^N y_{ij} \leq k \quad \forall i\in [N],\\
		\label{problem:eval_integrality}& y_{ij}\in \{0, 1\}^{N \times N-1}.
	\end{align}
\end{subequations}
The binary variables $y_{ij}$ are equal to $1$ if instance $I^j$ is in $\skPess$.  
We want to note that Problem~\eqref{problem:eval} decomposes into $N$ smaller optimization problems, one for each $i \in [N]$. Each of these is a cardinality constrained knapsack problem.
\begin{lemma}
	The integrality constraints \eqref{problem:eval_integrality} are redundant as the continuous relaxation of \eqref{problem:eval} has always an integer solution.
\end{lemma}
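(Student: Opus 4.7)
The plan is to exploit the decomposability of \eqref{problem:eval} and then give an explicit structural description of the feasible region of its continuous relaxation. The model separates into $N$ independent subproblems, one per $i\in[N]$, so it suffices to argue for a single such subproblem; I fix one $i$ and drop it from the notation. My first step is to evaluate the right-hand side of \eqref{problem:eval_bound} explicitly: since the $d_{ij}$ are non-negative and $y'$ is box-constrained, the minimum is attained by setting $y'_j=1$ exactly for the $k$ indices with smallest $d_{ij}$-values, so it equals $D^* := \sum_{\ell=1}^{k} d_{(\ell)}$, where $d_{(1)} \le d_{(2)} \le \cdots$ denotes the sorted values of $\{d_{ij}\}_{j\neq i}$. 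Writing $\varepsilon := d_{(k)}$, I would then partition the indices $j\neq i$ into strict neighbors $S=\{j : d_{ij}<\varepsilon\}$, borderline neighbors $B=\{j : d_{ij}=\varepsilon\}$, and far instances $F=\{j : d_{ij}>\varepsilon\}$, noting $|S|<k\le|S|+|B|$, $\bar k:=k-|S|\ge 1$, and $D^* = \sum_{j\in S} d_{ij}+\bar k\,\varepsilon$.

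The crux is to show that every feasible point $y$ of the continuous relaxation necessarily satisfies $y_j=1$ for $j\in S$, $y_j=0$ for $j\in F$, and $\sum_{j\in B} y_j=\bar k$. I would combine the two substantive constraints by estimating
\[
\sum_j d_{ij}y_j \;\ge\; \sum_{j\in S} d_{ij}y_j + \varepsilon\!\sum_{j\in B\cup F} y_j \;\ge\; \sum_{j\in S} d_{ij}y_j + \varepsilon\bigl(k - \sum_{j\in S} y_j\bigr),
\]
applying the cardinality constraint in the second step. Substituting the identity $D^*=\sum_{j\in S} d_{ij}+\bar k\,\varepsilon$, the weight bound $\sum_j d_{ij}y_j \le D^*$ collapses to $\sum_{j\in S}(\varepsilon-d_{ij})\,y_j \ge \sum_{j\in S}(\varepsilon-d_{ij})$. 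Since $\varepsilon-d_{ij}>0$ on $S$ and $y_j\le 1$, this forces $y_j=1$ on all of $S$; tracing back through the chain of inequalities then pins $y_j=0$ on $F$ and $\sum_{j\in B}y_j=\bar k$.

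Once this structure is in hand, the feasible region of the continuous relaxation is $\{y : y_j=1\ \forall j\in S,\ y_j=0\ \forall j\in F,\ y_B\in[0,1]^{|B|},\ \sum_{j\in B}y_j=\bar k\}$, an affine image of the $(|B|,\bar k)$-hypersimplex, whose vertices are all 0/1. Hence every LP-optimal vertex is integer and the integrality constraints \eqref{problem:eval_integrality} are redundant. I expect the main obstacle to be the identity that collapses the two-constraint system onto the hypersimplex: on a generic two-constraint knapsack LP one does see genuinely fractional extreme points, so the argument must really exploit the specific value $D^*=\sum_{j\in S} d_{ij}+\bar k\,\varepsilon$ induced by the min on the right-hand side of \eqref{problem:eval_bound}.
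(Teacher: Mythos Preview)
Your argument breaks at the second inequality in the displayed chain,
\[
\varepsilon\sum_{j\in B\cup F} y_j \;\ge\; \varepsilon\Bigl(k - \sum_{j\in S} y_j\Bigr),
\]
which is equivalent to $\sum_{j}y_j\ge k$. Constraint~\eqref{problem:eval_k} in the model reads $\sum_{j\neq i}y_{ij}\le k$, the opposite direction. With $\le k$ the structural claim ``every feasible $y$ has $y_j=1$ on $S$, $y_j=0$ on $F$, and $\sum_{j\in B}y_j=\bar k$'' is plainly false---already $y=0$ is feasible---so the hypersimplex description of the feasible set does not hold.

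This is not a cosmetic slip. With the cardinality constraint written as $\le k$, the relaxed polytope can have a fractional vertex that is the unique LP optimum: take $N-1=3$, $k=2$, instance distances $(d_{i1},d_{i2},d_{i3})=(1,2,3)$, so $D^*=3$; then $y=(1,0,\tfrac{2}{3})$ is a vertex of $\{y\in[0,1]^3:\ y_1+2y_2+3y_3\le 3,\ y_1+y_2+y_3\le 2\}$, and for solution distances $(d_{\mathcal X}(i,1),d_{\mathcal X}(i,2),d_{\mathcal X}(i,3))=(2,1,3)$ it is the unique maximizer with value $4$, whereas every integer feasible point attains at most $3$. Your own closing remark about ``generic two-constraint knapsack LPs'' having fractional extreme points is thus exactly on target. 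The paper's proof makes the same unjustified leap (it asserts $y_{ij}=1$ on the strict neighbors and $y_{ij}=0$ outside the $\varepsilon$-ball without argument). Both proofs become correct if \eqref{problem:eval_k} is read as $\sum_{j\neq i}y_{ij}=k$, which matches the intended semantics of selecting exactly $k$ neighbors; under that reading your derivation is complete and slightly more explicit than the paper's swap argument on the borderline set, arriving at the same hypersimplex structure.
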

\begin{proof}
We show that vectors with fractional $y_{ij}$ entries do not
  correspond to vertices of the linear programming relaxation.
	Let $\epsilon > 0$ be such that $ |\sEpsStrict| \leq k \leq |\sEpsNeighbour|$. Then the right hand side of Constraint~\eqref{problem:eval_bound} evaluates to
	\begin{equation*}
		\sum_{j \in \sEpsStrict} d_{ij} + (k - \lvert \sEpsStrict \rvert) \epsilon.
	\end{equation*}
	Hence, the variables $y_{ij}$ have value $1$ for all $j \in \sEpsStrict$, and have value $0$ for all $j\notin \sEpsNeighbour$.
	If further a variable $y_{ij}$ with $j \in \sEpsBorder$ has a fractional value, another index $j^\prime \in \sEpsBorder$ exists with $y_{ij^\prime}$ fractional due to Constraint~\eqref{problem:eval_k}. As $d_{ij} = d_{ij^\prime}$, increasing $y_{ij}$ by $\min\{(1 - y_{ij}), y_{ij^\prime}\}$ and decreasing $y_{ij^\prime}$ by the same number preserves feasibility. Hence, no fractional $y$ can be a vertex of the feasible set of the continuous relaxation of Problem~\eqref{problem:eval} and the integrality constraints can be removed without consequences.
\end{proof}

We dualize the inner minimization problem to replace Constraint~\eqref{problem:eval_bound} by
\begin{subequations}\label{problem:eval_dualize_inner}
	\begin{align}
		& \sum_{j=1, j\neq i}^N d_{ij}y_{ij} \leq k \pi_i - \sum_{j=1, j\neq i}^N \rho_{ij} \quad \forall i\in [N],\\
		&\pi_i - \rho_{ij} \leq d_{ij} \quad \forall i\neq j\in [N],\\
		& \pi_i \geq 0, \rho_{ij} \geq 0 \quad \forall i\neq j\in [N].
	\end{align}
\end{subequations}
and dualize Problem~\eqref{problem:eval}, considering Constraint~\eqref{problem:eval_k} as inequality, and express the $d_{ij}$ as a combination of at most $L$ instance features to get
\begin{subequations}\label{problem:eval_dualize_outer}
	\begin{align}
		\min \quad & \sum_{i=1}^N \sum_{j=1,j\neq i}^N d_{ij} \beta_{ij} + \sum_{i=1}^N k \gamma_i + \sum_{i=1}^N \sum_{j=1, j\neq i}^N \delta_{ij}\\
		\nonumber\text{s.t.} \quad & \eqref{problem:optimistic_fspeo:L_features}, \eqref{problem:optimistic_fspeo:instance_diff}\\
		\label{problem:eval_dualize_outer:b_cons}& d_{ij} \alpha_i + \gamma_i + \delta_{ij} \geq \soldist\quad \forall i\neq j\in [N],\\
		\label{problem:eval_dualize_outer:alpha_bound}& \sum_{j=1,j\neq i}^N \beta_{ij} \geq k \alpha_i \quad \forall i\in [N],\\
		\label{problem:eval_dualize_outer:beta_bound}& \alpha_i \geq \beta_{ij} \quad \forall i\neq j\in [N],\\
		&\alpha, \beta, \gamma, \delta, d \geq 0, \;b \in \{0, 1\}^{p}.
	\end{align}
\end{subequations}

We note that Problem~\eqref{problem:eval_dualize_outer} has quadratic terms in the objective function and Constraint~\eqref{problem:eval_dualize_outer:b_cons}. In order to reformulate the optimization problem as a mixed-integer linear program, we note that $d_{ij}$ can be replaced by binary variables using \eqref{problem:optimistic_fspeo:instance_diff}.
The product of binary variables $b_f$ and continuous variables $\alpha_i$ and $\beta_{ij}$ can be linearized using standard envelope inequalities, if
the continuous variables are bounded for all $i\neq j \in [N]$. As Constraint~\eqref{problem:eval_dualize_outer:beta_bound} bounds $\beta_{ij}$ by $\alpha_i$, it suffices to show that the $\alpha_i$ are bounded.
\begin{lemma}\label{lemma:bound_on_alpha}
	There is an optimal solution $(\alpha^*, \beta^*, \gamma^*, \delta^*, d^*, b^*)$ to Problem~\eqref{problem:eval_dualize_outer} that fulfills
	\begin{equation}
		\label{alphabound}\alpha_i^* \leq \max_{j \in [N]\setminus\{i\}} \frac{\soldist}{\min_{f\in \instfeatures: d_f(i,j) \neq 0} d_f(i,j)}
	\end{equation}
\end{lemma}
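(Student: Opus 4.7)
The plan is an exchange argument: starting from an arbitrary optimal solution $(\alpha^*, \beta^*, \gamma^*, \delta^*, d^*, b^*)$ of Problem~\eqref{problem:eval_dualize_outer}, I will construct another feasible solution of no larger objective value in which every $\alpha_i$ already respects the bound \eqref{alphabound}. Writing $A_i$ for the right-hand side of \eqref{alphabound}, the candidate is obtained by truncating each violating $\alpha_i^*$ to $A_i$ and capping the corresponding $\beta_{ij}$ variables at the same value, while leaving $\gamma, \delta, d, b$ unchanged.

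First I will check Constraint~\eqref{problem:eval_dualize_outer:b_cons}. The central observation is that whenever $d_{ij}^* > 0$, the definition $d_{ij}^* = \sum_{f \in [p]} b_f^* d_f(i,j)$ forces $d_{ij}^* \ge \min_{f: d_f(i,j) > 0} d_f(i,j)$, since the sum contains at least one positive summand that is itself lower-bounded by this minimum. Combined with the fact that $A_i$ is a maximum over all $j \neq i$, this yields $d_{ij}^* \, A_i \ge d_{\mathcal{X}}(i,j)$, so \eqref{problem:eval_dualize_outer:b_cons} remains valid after replacing $\alpha_i^*$ by $A_i$; for the indices $j$ with $d_{ij}^* = 0$ the constraint does not involve $\alpha_i$ at all and is therefore untouched.

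The main obstacle lies in reconciling the two linking constraints \eqref{problem:eval_dualize_outer:alpha_bound} and \eqref{problem:eval_dualize_outer:beta_bound} after truncation. The cap $\beta_{ij} \le \alpha_i$ forces me to reduce the $\beta_{ij}$ together with $\alpha_i$, but the aggregate bound $\sum_j \beta_{ij} \ge k\alpha_i$ must still hold. I plan to resolve this by defining $\beta_{ij}^{\text{new}} = \min(\beta_{ij}^*, A_i)$ and proving the pointwise scaling estimate $\min(\beta_{ij}^*, A_i) \ge (A_i/\alpha_i^*)\,\beta_{ij}^*$ by a short case distinction: the case $\beta_{ij}^* \le A_i$ is trivial since $A_i/\alpha_i^* \le 1$, and the case $\beta_{ij}^* > A_i$ uses $\beta_{ij}^* \le \alpha_i^*$ from \eqref{problem:eval_dualize_outer:beta_bound}. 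Summing this pointwise estimate over $j$ and invoking $\sum_j \beta_{ij}^* \ge k \alpha_i^*$ then delivers $\sum_j \beta_{ij}^{\text{new}} \ge k A_i$, which is exactly what \eqref{problem:eval_dualize_outer:alpha_bound} demands at $\alpha_i = A_i$.

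The remaining verifications are routine: \eqref{problem:eval_dualize_outer:beta_bound} is satisfied by construction, and because $\beta$ only decreases while $\gamma, \delta, b, d$ are unchanged and the objective coefficients $d_{ij}^*$ of $\beta_{ij}$ are nonnegative, the objective value cannot increase. Iterating the truncation over all indices $i$ with $\alpha_i^* > A_i$ yields an optimal solution obeying \eqref{alphabound}, which proves the lemma.
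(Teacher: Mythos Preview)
Your proposal is correct and follows essentially the same exchange argument as the paper: truncate a violating $\alpha_i^*$ to $A_i$, adjust the associated $\beta_{ij}$ downward, verify \eqref{problem:eval_dualize_outer:b_cons} via the case split $d_{ij}^*=0$ versus $d_{ij}^*>0$ (using $d_{ij}^*\ge \min_{f:d_f(i,j)>0} d_f(i,j)$ in the latter case), and observe that the objective cannot increase. The only difference is cosmetic: the paper rescales $\beta$ proportionally, setting $\tilde\beta_{ij}=(A_i/\alpha_i^*)\beta_{ij}^*$, whereas you truncate via $\beta_{ij}^{\text{new}}=\min(\beta_{ij}^*,A_i)$; your pointwise estimate $\min(\beta_{ij}^*,A_i)\ge (A_i/\alpha_i^*)\beta_{ij}^*$ shows your choice dominates the paper's termwise, so both satisfy \eqref{problem:eval_dualize_outer:alpha_bound} for the same reason.
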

\begin{proof}
	Assume that Inequality~\eqref{alphabound} does not hold for  $i \in [N]$. Then we can construct another optimal solution by
        setting $\tilde{\alpha}_i^*$ to the right-hand side of the inequality, and $\tilde{\beta}^*_{ij}$ to $\frac{\beta^*_{ij}\tilde{\alpha}_i^*}{\alpha_i^*}$ for all $j\in [N]$. For this, we consider two sets $M^0 := \{j \in [N]\setminus \{i\} \colon d^*_{ij} = 0\}$ and $M^+:= \{j \in [N]\setminus \{i\} \colon d^*_{ij} \neq 0\}$. For $j \in M^0$, it holds that
	\begin{equation*}
		\gamma_i^* + \delta^*_{ij} \geq d_\X(\solution^i,\solution^j),
	\end{equation*}
	hence this constraint does not restrict our choice on $\alpha_i$. For $j \in M^+$ it holds that
	\begin{align*}
		d_{ij}^* \tilde{\alpha}_i + \gamma_i^* + \delta_{ij}^*& \geq \min_{f\in \instfeatures: d_f(i,j) \neq 0}d_f(i,j) \frac{\soldist}{\min_{l\in \instfeatures: d_f(i,j) \neq 0} d_f(i,j)} + \gamma_i^* + \delta_{ij}^*\\ 
		&\geq \min_{f\in \instfeatures: d_l(i,j) \neq 0}d_f(i,j) \frac{\soldist}{\min_{f\in \instfeatures: d_f(i,j) \neq 0} d_f(i,j)}\\ 
		&= \soldist. 
	\end{align*}
Since
	\begin{equation*}
		\sum_{j =1,j\neq i}^N \tilde{\beta}_{ij}^* = \sum_{j =1,j\neq i}^N \frac{\beta_{ij}^* \tilde{\alpha}^*_i}{\alpha_i^*} = \frac{\tilde{\alpha}^*_i}{\alpha_i^*} \sum_{j =1,j\neq i}^N \beta_{ij}^* \geq \frac{\tilde{\alpha}^*_i}{\alpha_i^*} k \alpha_i^* = k \tilde{\alpha}_i^*
	\end{equation*}
the new solution fulfills Constraint~\eqref{problem:eval_dualize_outer:alpha_bound}.
We calculate that with
	\begin{equation*}
		\tilde{\alpha}_i^* = \frac{\tilde{\alpha}_i^* \alpha_i^*}{\alpha_i^*} \leq \frac{\tilde{\alpha}_i^* \beta_{ij}^*}{\alpha_i^*} = \tilde{\beta}_{ij}^*,
	\end{equation*}
Constraint~\eqref{problem:eval_dualize_outer:beta_bound} is also satisfied. Hence the solution we constructed is feasible and fulfills Inequality~\eqref{alphabound}. As $\tilde{\beta}_{ij}^* \leq \beta_{ij}^*$ for all $j \in [N]\setminus \{i\}$, its objective value is not worse than the value of the original solution. This proves the claim. 
\end{proof}

Lemma~\ref{lemma:bound_on_alpha} ascertains that we can reformulate the terms $d_{ij}\beta_{ij}$, as well as $d_{ij}\alpha_i$, with standard techniques.
As it has turned out in preliminary computational results that Problems~\eqref{problem:optimistic_fspeo} and \eqref{problem:eval_dualize_outer} are notoriously hard to solve, we present heuristic approaches for both versions of the \fspeo in the next section.

\subsection{Heuristic Approaches}\label{sec:heuristic}


High-quality heuristics are both relevant in their own right and can additionally be helpful to determine good starting solutions for the MIP formulations of the \fspeo presented in the previous section.
Fortunately, the constraints on the decisions that have to be made for the \fspeo are relatively simple.
A feature selection is uniquely defined by a subset $\fsel\subseteq \instfeatures$, and its feasibility depends solely on the size.
Hence, the structure of the set of feasible solutions simplifies the application of heuristics such as genetic algorithms, simulated annealing or swapping heuristics.
Preliminary computational tests have shown that several variants of
the $K$-opt heuristic performs surprisingly well on the \fspeo.

To evaluate a single feature selection $\fsel$, we evaluate the objective function (here denoted as $D(s)$) of the \fspeo from Definition~\ref{def:FSP-EO}. This ensures that the heuristic is assessed using the same evaluation measure as the mixed-integer programs~\eqref{problem:optimistic_fspeo} and \eqref{problem:eval}, while the heuristic itself remains unchanged between the optimistic and pessimistic cases and only the evaluated objective function differs.
\begin{algorithm}
	\caption{$K$-opt heuristic for the \fspeo}
	\begin{algorithmic}[1]
		\Require Initial set of selected features $\fsel$, integer $K$
		\Ensure Improved selection $\fsel$
		\State $imp \gets$ true
		\While{$imp$}
		\State $imp \gets$ false
		\ForAll{$s \in \{s' \subseteq \instfeatures \colon |s' \triangle \fsel| \leq 2K, |s'| = |\fsel|\}$}	\label{alg:iter}
		\If{$D(s) < D(\fsel)$}
		\State $\fsel \gets s$
		\State $imp \gets$ true
		\EndIf
		\EndFor
		\EndWhile
		\State \Return Improved selection $\fsel$
	\end{algorithmic}
	\label{alg:k-opt}
\end{algorithm}

In Algorithm~\ref{alg:k-opt}, we always replace exactly $k$ selected features by $k$ unselected ones, such that the total number of active features remains constant. If more flexibility in the number of selected features is desired, the algorithm can be adapted to allow swaps of individual entries in the binary feature vector, changing values from 0 to 1 and vice versa, while ensuring that the total number of selected features does not exceed the prescribed maximum number of $L$ features. This extension enables a more flexible feature cardinality, but at the same time increases the risk of becoming trapped in local minima.

Returning to Algorithm~\ref{alg:k-opt}, it is already apparent that the number of possible feature combinations grows exponentially with the size of the instance-feature space. Without further precautions, this would lead to prohibitively long runtimes. To obtain good solutions within a reasonable time, we introduce several practical modifications to the basic algorithm. In the following, we describe the corresponding hyperparameter choices, which were fixed for all experiments and tuned based on preliminary runs.

Instead of testing all possible permutations (step \ref{alg:iter}), we sample a maximum of 1000 permutations. Particularly in the early iterations of the $K$-opt algorithm, many improving permutations can be found. Therefore, we implemented an additional termination criterion that immediately proceeds to the next iteration as soon as 10 improving permutations are found. This allows the algorithm to capitalize on the high density of improving moves early on, significantly reducing the overall runtime.

To obtain a reasonably good starting solution of selected features, we evaluate 10 random vectors each time and use the best one as the required initial feature selection.
Moreover, we run the entire algorithm from 5 different starting vectors and keep the best final result. 

All numerical parameter choices reflect settings that worked well in our experimental setup, but they are not intrinsic to the method and can be adjusted depending on instance size, available computation time, and desired solution quality.

\section{Computational Experiments}\label{sec:results}

Since the performance of the feature selection approach cannot be meaningfully assessed in isolation, we evaluate it within the explainable optimization framework for which it was originally developed.
In this section, we present the experimental setup used to evaluate the effectiveness of our feature selection approaches. The experiments are designed to assess both computational performance and explainability, using real-world traffic data from Chicago (see \cite{chassein2019algorithms,zenodo_chicago_data}). We analyze the performance on different graph structures, varying instance sizes, and feature sets to understand their impact on the explainability and quality of the obtained solutions. To ensure the robustness of our findings, we conduct experiments on multiple subsets of the data and compare our results against established benchmarks.

\subsection{Data, Features and Performance Evaluation}
\begin{figure}[htbp]
	\centering
	\begin{subfigure}[b]{0.45\textwidth}
		\centering
		\includegraphics[width=.7\textwidth]{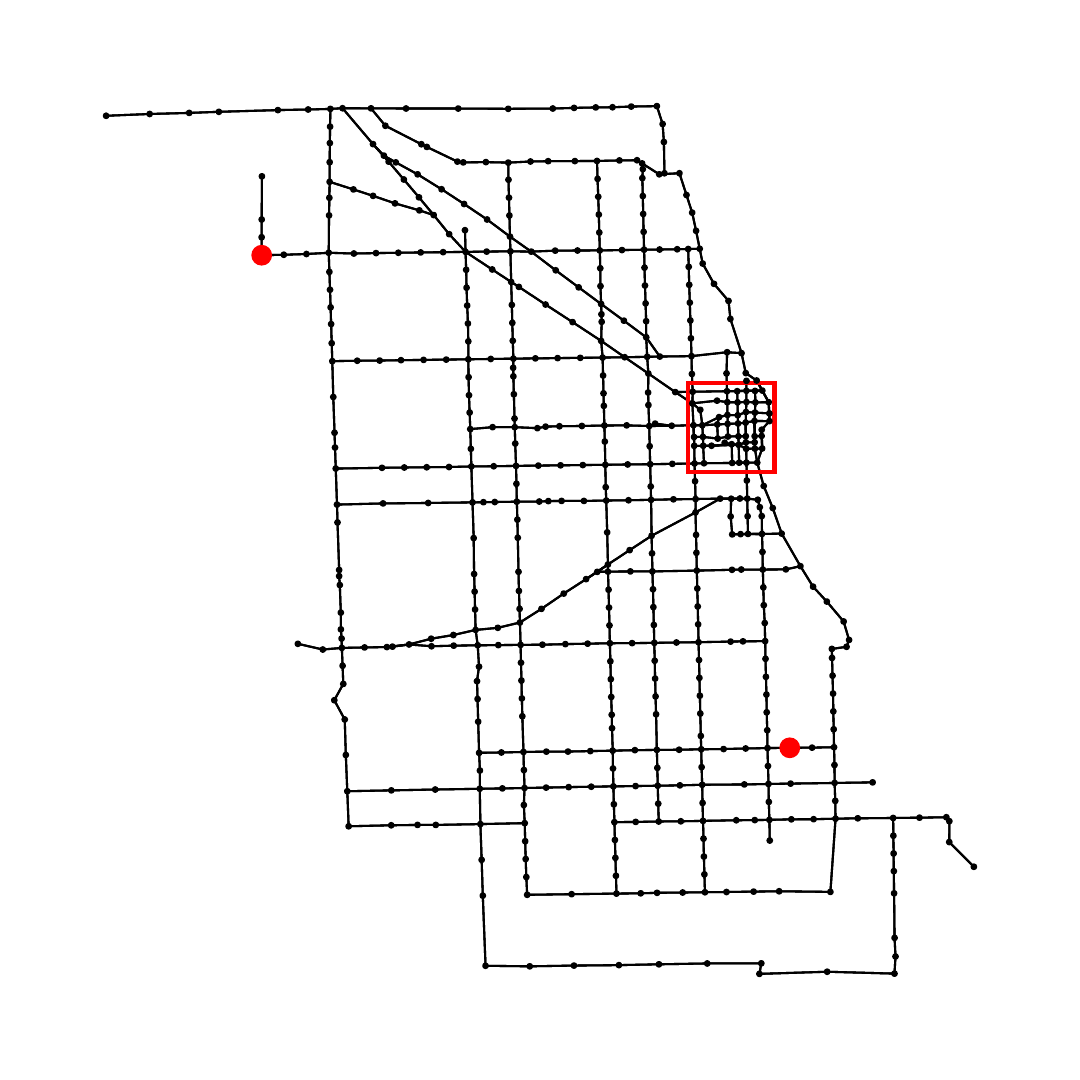}
		\caption{full graph	\label{fig:graph_full}}
	\end{subfigure}
	\hfill
	\begin{subfigure}[b]{0.45\textwidth}
		\centering
		\includegraphics[width=.7\textwidth]{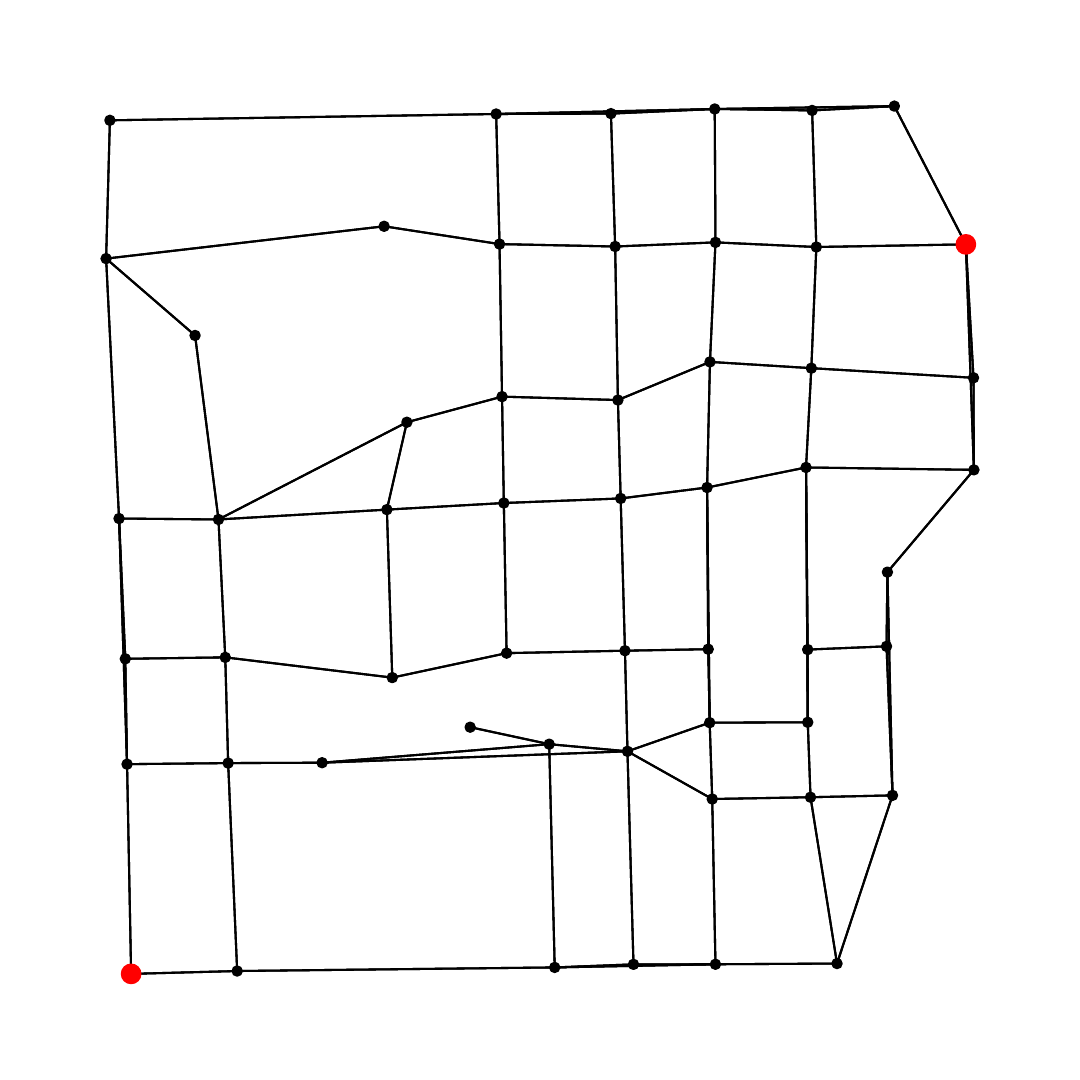}
		\caption{city center\label{fig:graph_cc}}
		
	\end{subfigure}
	\caption{Used graphs for the experiments. The start and end point of the shortest path are marked in red. The extracted city center is marked in the left\label{fig:used_graphs}}
	
\end{figure}
For our experiments, we consider the shortest path problem on a real-world road network derived from data collected from bus drivers in Chicago. The network topology consists of 538 nodes and 1,287 edges, where each edge is defined by the coordinates of its start and end points. Together, these nodes and edges represent the city’s road network, as illustrated in Figure~\ref{fig:graph_full}. For some experiments we focus on a subgraph  (the city center) with only 54 nodes and 196 edges (see Figure \ref{fig:graph_cc}). For both types of experiments we fixed the start and end point as shown in the respective figures. The dataset comprises 4,363 historical scenarios of average edge velocities, which we use as edge weights. Throughout our experiments, we use version~2 of the dataset as provided via Zenodo~\citep{zenodo_chicago_data}.
To complete the data required by the framework, for each of the 4,363 traffic scenarios we computed the corresponding optimal solution and represent it as a binary feature vector. Each entry of this vector corresponds to an edge and takes the value 1 if the edge is part of the optimal path, and 0 otherwise. 
To perform feature selection, we also required a list of candidate features. To this end, we accumulated data into  so called grid features, by dividing the graph into a grid of rows and columns as shown in Figure~\ref{fig:grid_features}. For each resulting cell, the weights of all edges whose midpoints fall within the cell were summed up. This ensures that every edge contributes to exactly one feature. The grid dimensions were chosen individually for each  experiment and will be specified later. Additionally, unless specified otherwise, the individual edges were also included as potential features. Features that exhibited the same value in all data points were removed from the list.
\begin{figure}[htbp]
	\centering
	\begin{subfigure}[b]{0.45\textwidth}
		\centering
		\includegraphics[width=.7\textwidth,height=.67\textwidth]{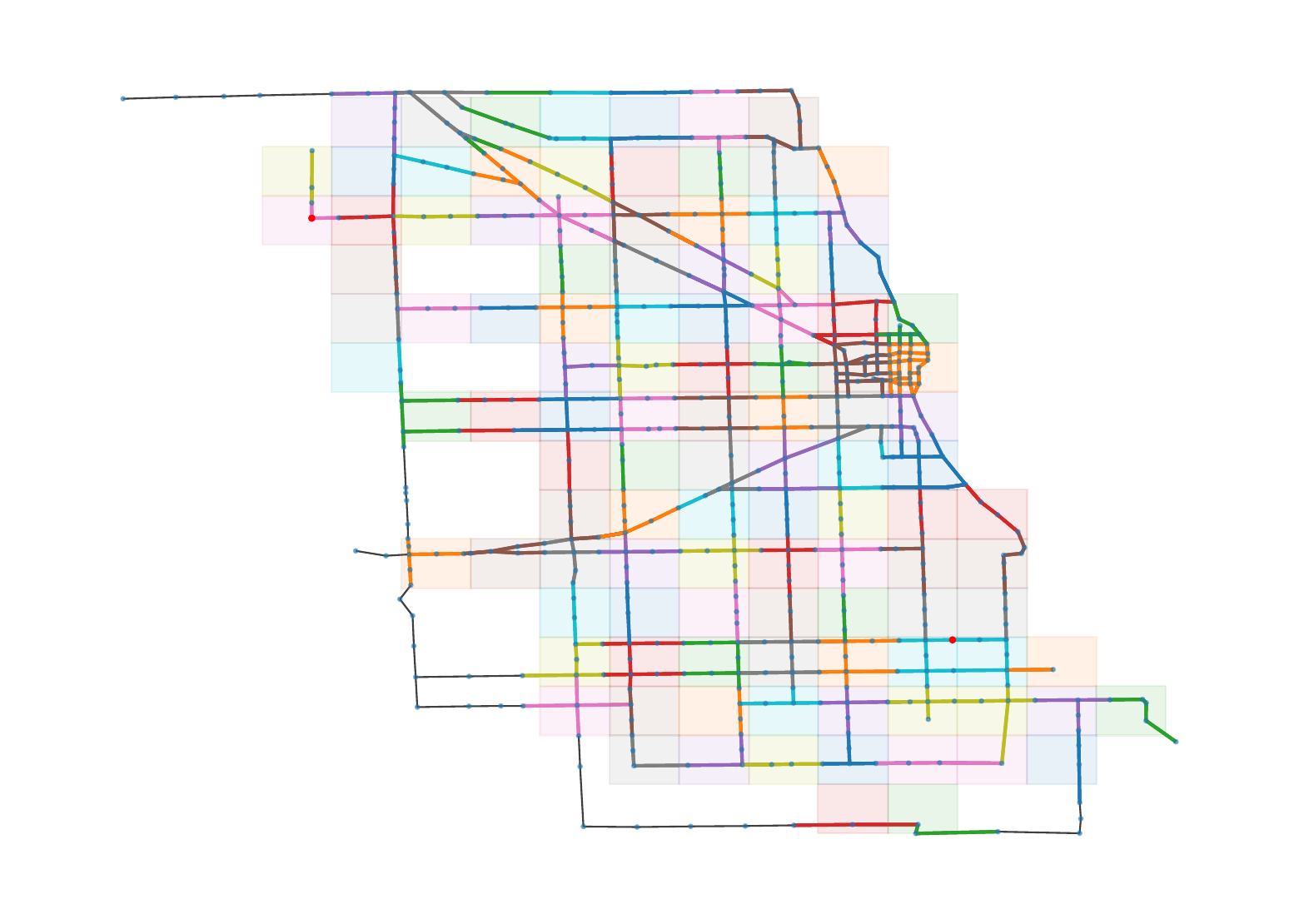}
		\caption{ 110 grid features on full graph}
		\label{fig:features_full}
	\end{subfigure}
	\hfill
	\begin{subfigure}[b]{0.45\textwidth}
		\centering
		\includegraphics[width=.7\textwidth,height=.67\textwidth]{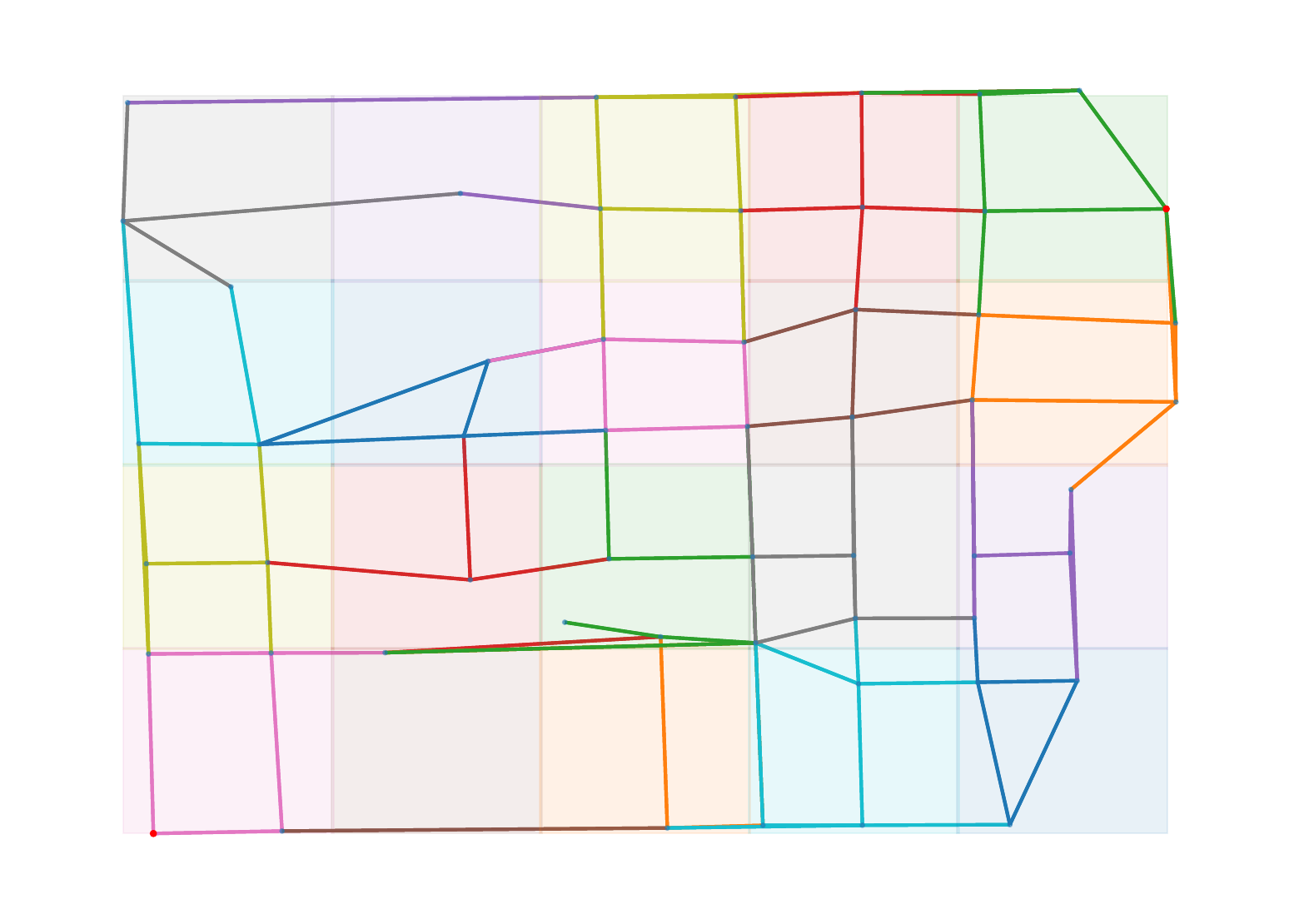}
		\caption{20 grid features on city center graph.}
		\label{fig:features_cc}
	\end{subfigure}
	\caption{Used grid features for the full graph and the subgraph. On the right, each edge is colored according to the grid cell that contains its midpoint.}
	\label{fig:grid_features}
\end{figure}



In the previous work \citep{aigner2024framework}, the bicriteria problem~\eqref{exp_formula} was addressed through a weighted-sum scalarization to generate a Pareto front illustrating the trade-off between optimality and similarity. In the present paper, however, our focus lies on the explainability aspect, for which similarity plays the central role. Consequently, we restrict our analysis to the variant of the problem that places full emphasis on similarity and disregards the optimality objective. We thus concentrate on the solution that aligns most closely with the most similar instances in $\sEpsNeighbour[]$ and compare its relative length to the optimal path to quantify the cost of explainability.

We then compare the performance of the solution obtained using the features selected by our model against solutions obtained by (i) using all individual edge weights as the instance feature vector, as done in \cite{aigner2024framework}, and (ii) randomly selecting $L$ instance features. The latter is repeated 100 times for each experiment, and average results are reported.




In all experiments, we set the neighborhood cardinality to  $k=5$. To ensure consistency in the experiments and to account for statistical fluctuations, we repeat each experiment ten times using different subsets of $N$ data point drawn from the dataset. The results are evaluated separately for each subset and then averaged.

All computations were performed on a high-performance cluster using a Python implementation.
The experiments were executed on a machine equipped with two Intel Xeon Gold 6326 (``Ice Lake'') processors, each featuring 2 $\times$ 16 cores clocked at 2.9 GHz. For solving the MIP models, we employed Gurobi 12.0.0.

\subsection{Results}
In this section, we evaluate our approaches on both small and large instances. All data is based on a real-world setting using realistic data.  

\subsubsection{First Experiment: Small-Scale Problems}

Here, we study a small-scale problem that allow us to obtain  optimal solutions via the proposed MIP formulations.
To this end, for each experiment we sample only $N=10$ data points from the data set and only use the city center graph with a reduced number of $2 \times 3$ grid features. In this simplified setup, the optimistic \fspeo (containing $2N^2$ big-M constraints) still can be solved to global optimality.
Interestingly, it turned out that  in this setting, also the $1$-opt
heuristic (see Section \ref{sec:heuristic}), consistently selects features that coincide with the optimal \fspeo results. However, this observation is likely due to the simplicity of the example and does not necessarily generalize to more complex instances.



When increasing the number of considered data points to $N=20$, the optimization model frequently exhibited a gap of 80--90\% after one hour of computation. For smaller problem sizes, this is likely due to slow improvements in the dual bound, whereas for larger instances, the 1-opt heuristic consistently outperforms the best \fspeo solutions obtained within the time limit. This indicates that best found solutions remain far from optimality in these cases. Similar challenges arise for the pessimistic \fspeo \eqref{problem:eval_dualize_outer}  which struggles to produce feasible solutions or exhibits large optimality gaps.
In the following, we therefore restrict our attention to the heuristic approach and evaluate it exclusively under the pessimistic tie-breaking rule. This choice reflects a worst-case perspective, as the pessimistic case penalizes equal nearest-neighbor assignments most strongly. In practice, however, ties between instance distances occur only very rarely in the considered real-world data sets, such that the difference between optimistic and pessimistic evaluations is negligible in most cases. Consequently, adopting the pessimistic variant provides a conservative evaluation of the heuristic without materially affecting the empirical results.

\subsubsection{Second Experiment: City Center Graph}
For the second experiment, we again used the smaller city center graph but increased the number of considered data points to $N=200$. The feature list, from which $L$ features were to be selected, included $4 \times 5$ grid features and additional edge features, resulting in a total of 121 candidate features.

In Figure \ref{fig:exp2} the $x$-axis represents the number of features that need to be selected and the $y$-axis shows the relative length of the most explainable path. We observe that even with a relatively
small number of features, our approach of first selecting relevant features often performs better than the original approach where costs of every single edge were used as features to find similar data points. Beyond producing better solutions, our approach increases explainability by limiting the comparison to few features (rather than all individual edge costs).
The random feature selection as the other benchmark leads to the worst solutions.

Interestingly, the selected features frequently included a balanced mix of individual edges and grid features. This suggests that a well-designed combination of both feature types could provide the best explainability. While the grid features were initially placed at equidistant intervals, a more detailed analysis of the city's structure could further refine their selection, potentially enhancing comprehensibility.

\begin{figure}[H]
	\centering
	\includegraphics[width=0.5\linewidth]{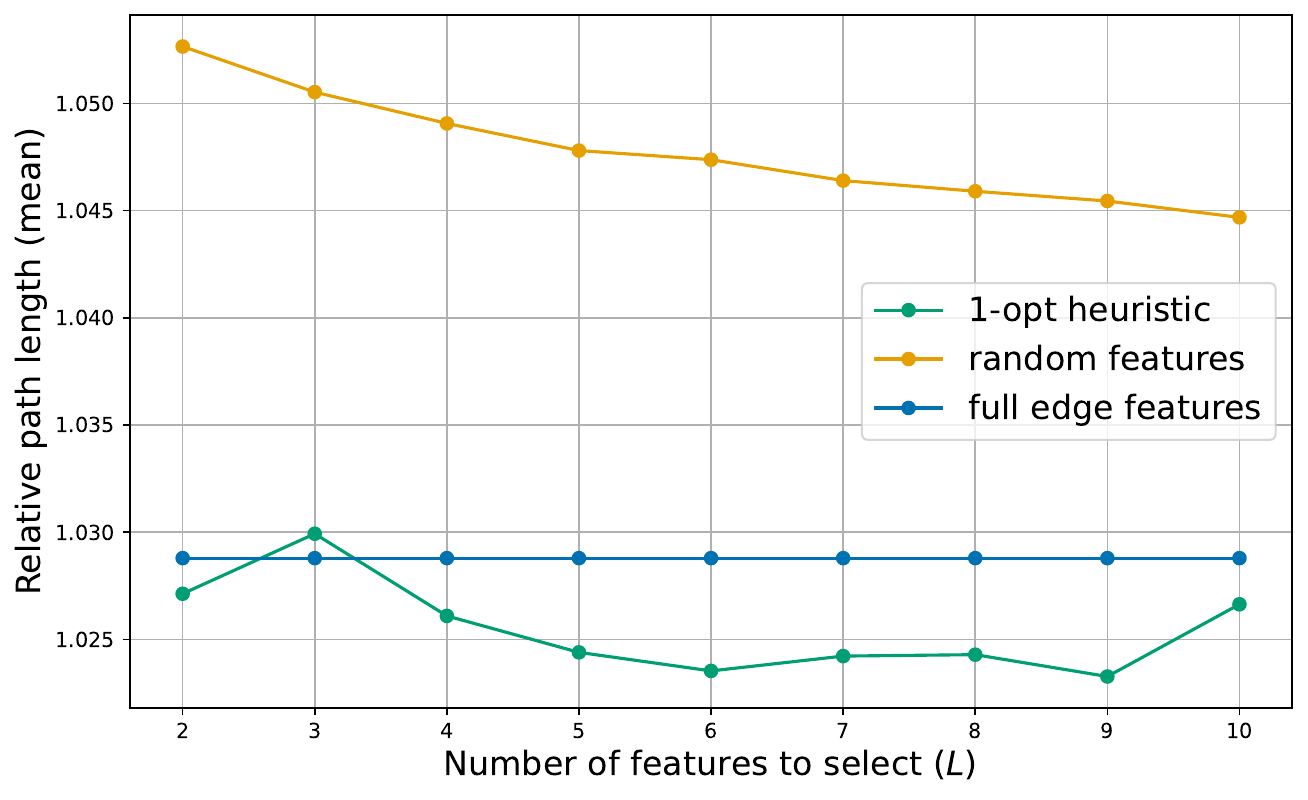}
	\caption{Relative length of the most
explainable path as a function of the cardinality $L$. Results are averaged over 10 runs with $N=200$ data points on the city center graph; the feature list contained 20 grid features as well as all individual edges.}
	\label{fig:exp2}
\end{figure}


\subsubsection{Third Experiment: Full Graph}

We also conducted experiments on the full graph of the city. This time, the graph was divided into a $15 \times 15$ grid, considering only the resulting grid features and excluding any edge features. Since many of these grid cells fall outside the graph, the feature candidate list consisted of 110 grid features. This grid size was chosen to achieve a similar number of features as in the city center graph experiment, ensuring comparability between the two setups.

As shown in Figure~\ref{fig:exp3}, the best explainable routes based on grid features do not outperform the solutions that are best explained by comparing all edge costs. Still, we achieve performance close to the full edge-feature benchmark, while having to compare traffic volumes in only few regions rather than on every individual edge. This substantially improves explainability for the end user. Specifically, we can now provide insights such as, \say{Focus on these regions of the city to compare traffic with other scenarios and identify optimal paths}, rather than analyzing every edge individually.

\begin{figure}[H]
	\centering
	\includegraphics[width=0.5\linewidth]{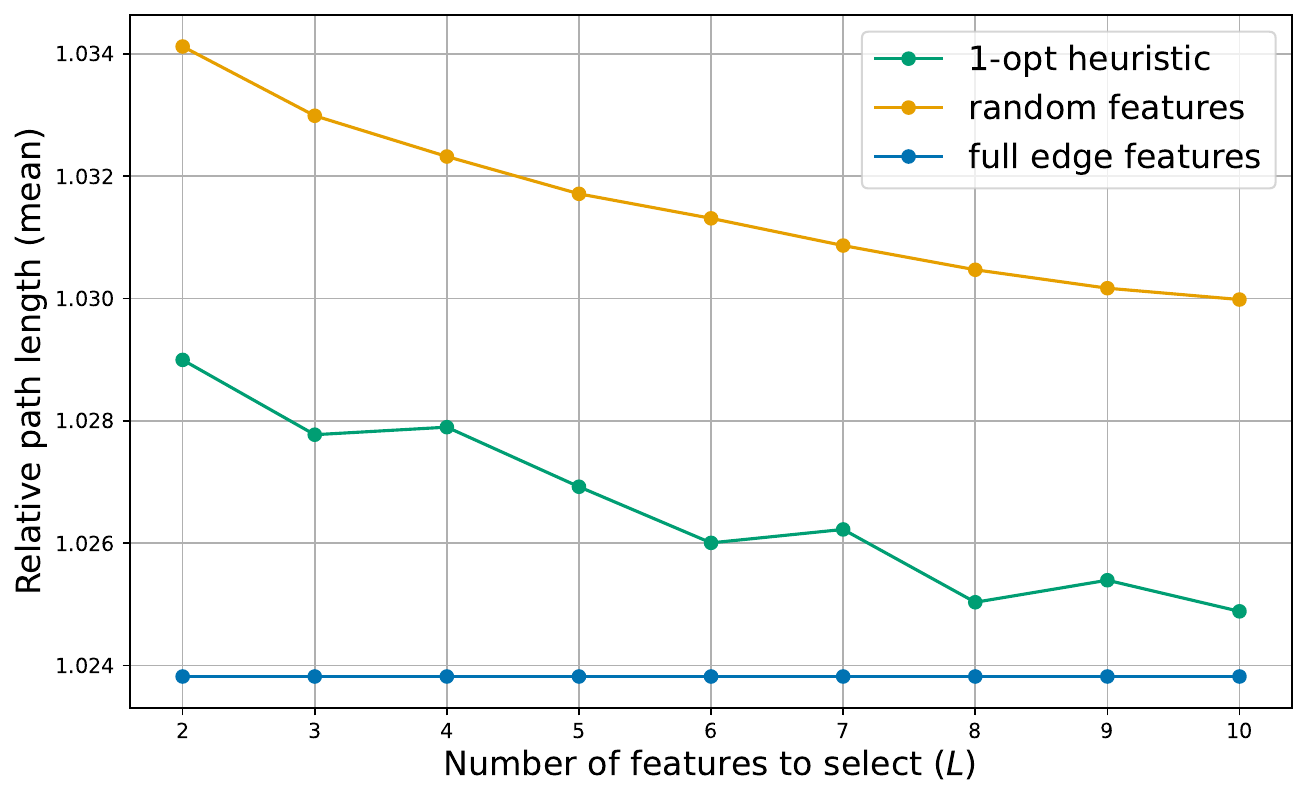}
	\caption{Relative length of the most
explainable path as a function of the cardinality $L$. Results are averaged over 10 runs with $N=200$ data points on the full graph; the feature list contained 110 grid features.}
	\label{fig:exp3}
\end{figure}

\subsubsection{Fourth Experiment: Modified Endpoints}

In the final experiment, we shift both the start and end points of the shortest path to the lower part of the graph. As a result, most edges in the upper region are never used for path selection, and their traffic load is likely uncorrelated with this section of the network (see Figure~\ref{fig:chicago_short}).

\begin{figure}[H]
	\centering
	\includegraphics[width=0.5\linewidth]{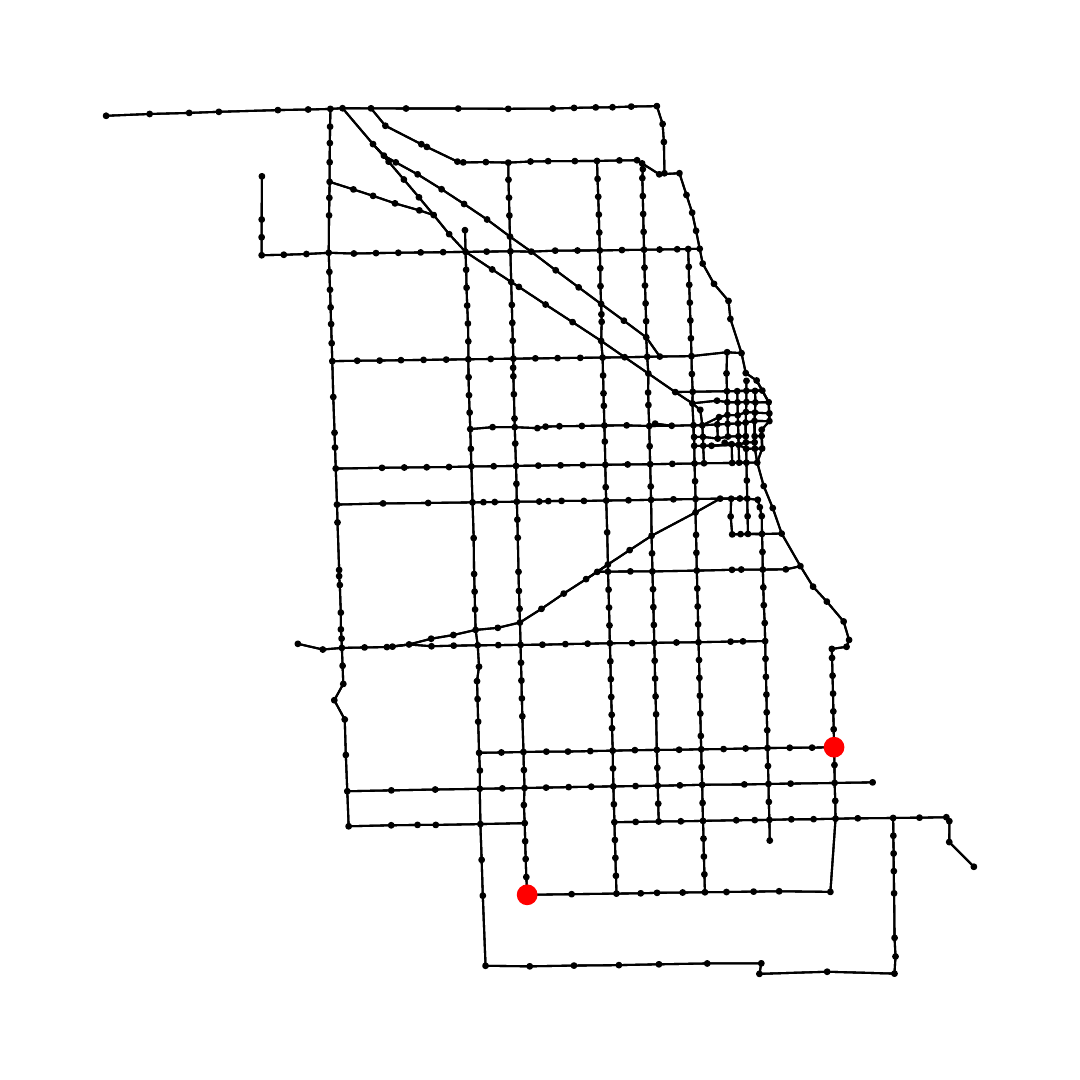}
	\caption{For this experiment, we again used the Chicago instance, but this time, the shortest path is determined between the two red dots in the lower half. As a result, the traffic in most of the network has little to no impact on the pathfinding process.}
	\label{fig:chicago_short}
\end{figure}

We use the same feature set as in the previous experiment, relying exclusively on grid features. However, in this setting, the feature selection approach outperforms the full edge features benchmark (see Figure~\ref{fig:exp4}). This is because it focuses only on features representing traffic in the relevant lower part of the graph, whereas the benchmark still evaluates the overall traffic distribution across the entire network.

This highlights the importance of feature selection in ensuring meaningful comparisons. Without it, irrelevant parts of the network---such as unused edges in this case---can distort the similarity assessment. By selecting only the most relevant features, we improve the explainability of the results and ensure that comparisons focus on the truly influential aspects of the network, leading to more reliable conclusions.

\begin{figure}[H]
	\centering
	\includegraphics[width=0.5\linewidth]{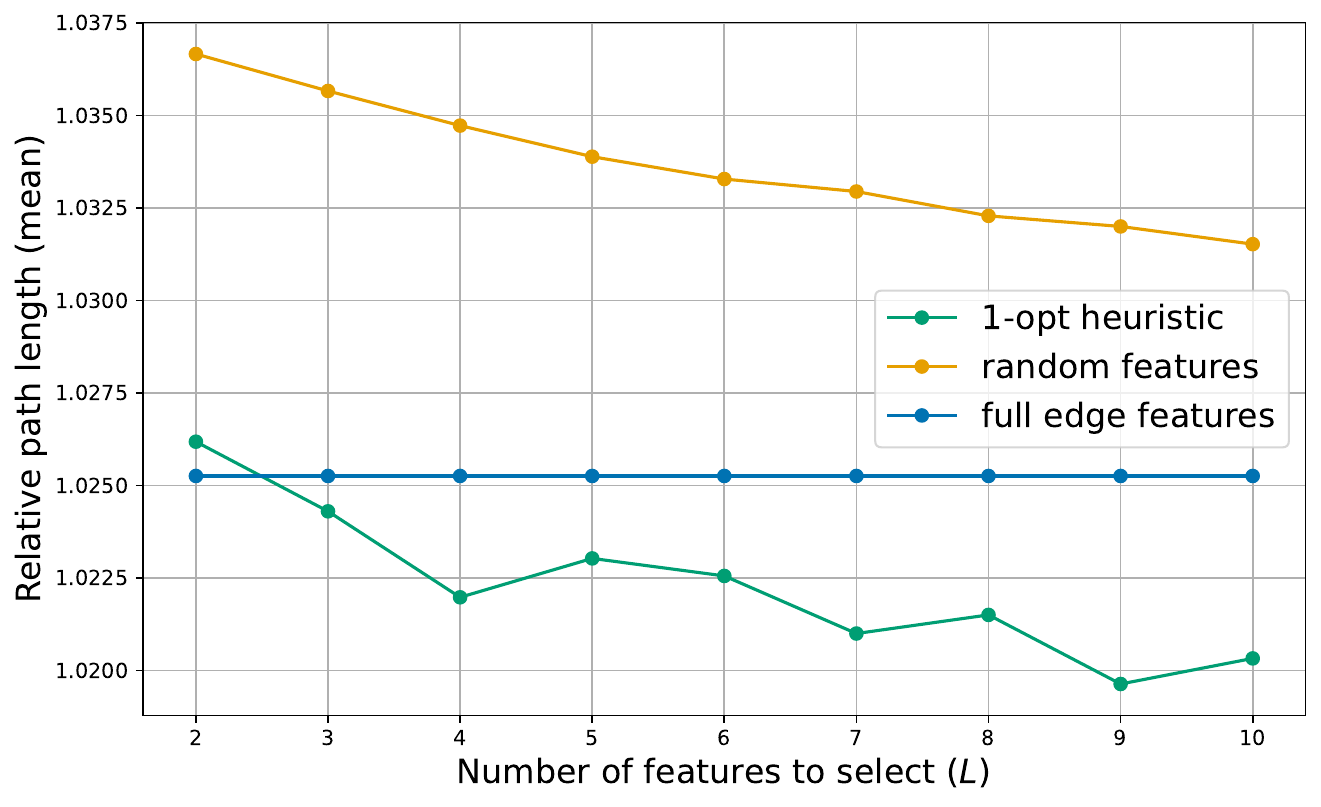}
	\caption{Relative length of the most explainable path as a function of the cardinality $L$. Results are averaged over 10 runs with $N=200$ data points on the full graph but with new start and end points (see Figure~\ref{fig:chicago_short}); the feature list contained 110 grid features.
}
	\label{fig:exp4}
\end{figure}

    \section{Conclusions and Outlook}\label{sec:conclusion}


Explainability of solutions is of key importance when applying optimization methods in practice. The best solution will, in the end, remain worthless if it is not accepted by practitioners. This paper picks up a recently introduced, data-driven concept of explainability in optimization and asks the question how to define features that are used to describe instances. We would like to pick a set of features that is not too large, so that they remain easily understandable. Furthermore, we should design them in a way that they are effective in the explanation process; in our context, this means that instance features are chosen so that similar instances lead to similar solutions.

To break ties between instances that have the same distance, we introduced an optimistic and pessimistic problem variant. We showed that both are NP-hard to solve, and introduced a mixed-integer programming formulation for each.
Due to the problem hardness, we proposed a local search heuristic that iteratively exchanges features to improve the solution quality.  In extensive computational experiments using real-world shortest path data, we compared our feature selection approach with randomly chosen features and with an approach that measures differences on each edge of the graph as benchmarks. Our results show that by using far fewer features than the latter approach, we can obtain a comparable performance in our setting. Even more, if not all data in the instance is relevant, using all edges can even be misleading, and our approach outperforms the benchmark while using fewer features.

Several avenues for further research emerge. A straightforward extension of our proposed concept is to use affine combinations of features, which enables us to weight the importance of features against each other, and even put negative weights on them. While this approach makes the proposed framework more flexible, it also comes at the cost of a more involved mixed-integer programming formulation and a less transparent comparison mechanism within the explanation. Furthermore, this paper only considered the side of instance features, which means that similar considerations for solution features remain open. Additionally, we assume that a set of feature candidates is given, of which we select a small subset. In a further step, we may consider how to find such a set, i.e., generalize from feature selection problems towards feature generation problems, both for solution and for instance features. Finally, the proposed framework of explainability is a first step in this direction, but we believe that many alternative definitions of explainability are conceivable and should be studied in the future.


\section*{Acknowledgments}
We thank the German Research Foundation for their support within Projects B06 and B10 in the Sonderforschungsbereich/Transregio 154 Mathematical Modelling, Simulation and Optimization using the Example of Gas Networks with Project-ID 239904186. This paper has been supported by the BMWK - Project 20E2203B.



\bibliographystyle{informs2014} 
\bibliography{references.bib} 




\end{document}